\newtheorem{theorem}{Theorem}[section]
\numberwithin{equation}{section}
\numberwithin{theorem}{section}
\newcommand{\norm}[2]{{\left\|#1\right\|}_{#2}}
\newcommand{\ccs}{c_{1,s}}
\newcommand{\ffl}[2]{(-d_x^{\,2})^{#1}#2}
\newcommand{\ue}[1]{#1^{\,\varepsilon}}
\newcommand{\RR}{\mathbb{R}}
\newcommand{\PP}{\mathcal{P}}
\newcommand{\uin}{u_{\textrm{\small in}}}
\title{\bf{WKB expansion for a fractional Schr\"odinger equation with applications to controllability}}
\author{Umberto Biccari \thanks{DeustoTech, University of Deusto, 48007 Bilbao, Basque Country, Spain.}\;\;\thanks{Facultad Ingenier\'{\i}a, Universidad de Deusto, Avda Universidades 24, 48007 Bilbao, Basque Country, Spain. Email: \texttt{umberto.biccari@deusto.es}}
	\and
	Alejandro B. Aceves \thanks{Southern Methodist University, Dedman College of Humanities and Sciences, PO Box 750235, Dallas, Texas, United States. Email: \texttt{aaceves@smu.edu}}
}
\date{}
\begin{document}
\bibliographystyle{acm}	
\maketitle

\begin{abstract}
This paper is devoted to the analysis of propagation properties for the solutions of a one-dimensional non-local Schr\"odinger equation involving the fractional Laplace operator $\ffl{s}{}$, $s\in(0,1)$. We adopt a classical WKB approach and we provide a systematic procedure for building a suitable ansatz for the solutions to the problem. In this way, we can obtain quasi-solutions which are localized along the rays of geometric optics, whose group velocity can be computed explicitly in terms of the parameter $s$.  Our results are then confirmed by numerical simulations, based on a finite element approximation of the fractional Laplacian and on a Crank-Nicholson scheme for the time integration. As an application, the controllability problem for the fractional Schr\"odinger equation is analyzed, finding confirmations of previously known results.
\\
\textbf{Keywords}: WKB, fractional Laplacian, Schr\"odinger equation, propagation of solutions.
\\
\textbf{Mathematics Subject Classification (2010)}: 35A35, 35Q41, 35Q60, 78A05.
\end{abstract}

\section{Introduction}\label{intro}

In this paper, we are interested in the construction of ray-like solutions in geometric optics for the following one-dimensional non-local equation
\begin{equation}\label{main_eq}
	\PP_s u:= \left[i\partial_t + \ffl{s}{}\right]u = 0, \;\;\;\;  (x,t)\in\RR\times(0,+\infty), 
\end{equation}
with highly oscillatory initial datum
\begin{equation}\label{in_dat}
	u(x,0) = \uin(x) e^{i\frac{\xi_0}{\varepsilon} x}:=u_0(x),\;\;\; \xi_0\in\RR.
\end{equation}

In \eqref{main_eq}, $\ffl{s}{}$ is the fractional Laplacian, defined for all $s\in(0,1)$ and for any function $f$ sufficiently smooth as the following singular integral
\begin{align*}
	\ffl{s}{f}(x):=\ccs\; P.V. \int_{\RR}\frac{f(x)-f(y)}{|x-y|^{1+2s}}\,dy,
\end{align*}
with $\ccs$ a normalization constant given by (see \cite[Section 3]{dihitchhiker} and \cite[Appendix A]{ros2014pohozaev})
\begin{equation}\label{ccs}
	\ccs:= \left(\int_{\RR} \frac{1-\cos(z)}{|z|^{1+2s}}\,dz\right)^{-1} = \frac{s2^{2s}\Gamma\left(s+\frac 12\right)}{\sqrt{\pi}\Gamma(1-s)},
\end{equation}
where $\Gamma$ is the usual Gamma function. The parameter $\varepsilon$ in \eqref{in_dat} represents the typical wavelength of oscillations of the initial data, which in turn will affect the solution form of \eqref{main_eq}. Moreover, we will assume the initial phase $\uin$ to be an $L^2(\RR)$ function, so that we have $u_0\in L^2(\RR)$.

Space-fractional Schr\"odinger equations have been introduced by Laskin in quantum mechanics (\cite{laskin2000quantum,laskin2000fractional,laskin2002fractional}), since they provide a natural extension of the
standard local model when the Brownian trajectories in Feynman path integrals are replaced by Levy flights, which are generated by the fractional Laplacian. Applications of \eqref{main_eq} may be found in the study of a condensed-matter realization of L\'evy crystals (\cite{stickler2013potential}). More recently, the fractional Schr\"odinger equation was introduced into optics by Longhi in \cite{longhi2015fractional}, with applications to laser implementation. 

The present paper deals with the construction of asymptotic approximations in geometric optics for the solutions to \eqref{main_eq}, and with their application to the study of controllability problems. 

Asymptotic analysis for wave-like equations through geometric optics (also known as the Wentzel-Kramers-Brillouin (WKB) method or ray-tracing, \cite{brillouin1926mecanique,kramers1926wellenmechanik,spigler1997survey,wentzel1926verallgemeinerung}) is nowadays a classical tool that has been developed in several directions. An incomplete biography on the topic includes \cite{liu2010recovery,liu2013error,liu2015sobolev}. It is by now well-known that wave-type equations, in a local framework, have solutions that are localized near curves $(t,x(t))$ in space-time, also called rays. These curves are, in the interior of the domain of definition of the equation, solutions of a Hamiltonian system of ordinary differential equations which involves the coefficients of the operator. When one of these trajectories hits the boundary of the domain it is reflected according to the classical laws of optics.

With this observation in mind, asymptotic methods allow to study the behavior of several wave-type phenomena, with applications, e.g., in geophysics (\cite{vcerveny1982computation,hill2001prestack}), acoustic wave equations (\cite{liu2009recovery,tanushev2008superpositions}) or gravity waves (\cite{tanushev2007mountain}). 

To the best of our knowledge, a WKB approach has not yet been fully developed in a non-local setting. On the other hand, this is certainly an interesting issue, not only from a purely mathematical perspective, but also due to the several applications that we mentioned above. Our work represents a first step in this direction, providing a complete procedure for obtaining a WKB expansion of equation \eqref{main_eq}. In more detail, we will show that given a ray $(t,x(t))$ it is possible to construct quasi-solutions of the fractional Schr\"odinger equation \eqref{main_eq} such that the amount of their energy outside a ball of radius $\varepsilon^{\frac 14}$ centered at $x(t)$ is of the order of $\varepsilon^{\frac 14}$. 

A motivation and a natural application for our construction will then be the study of controllability properties for the one dimensional fractional Schr\"odinger equation 
\begin{align}\label{schr_control}
	\begin{cases}
		iu_t+\ffl{s}{u} = g\chi_{\omega\times(0,T)}, & (x,t)\in (-1,1)\times(0,T)
		\\
		u\equiv 0, & (x,t)\in(-1,1)^c\times(0,T)
		\\
		u(x,0)=u_0(x), & x\in(-1,1),
	\end{cases}
\end{align}
where $\omega$ is a subset of the space domain $(-1,1)$. As we already proved in \cite{biccari2014internal} by means of spectral analysis techniques, for \eqref{schr_control} we have the following control properties:
\begin{itemize}
	\item[$\bullet$] For $s> 1/2$, null controllability holds in any finite time $T>0$. In other words, given any $u_0\in L^2(-1,1)$ there exists a control function $g\in L^2(\omega\times(0,T))$ such that the solution to \eqref{schr_control} satisfies $u(x,T)=0$.
	
	\item[$\bullet$] For $s=1/2$, the same result holds if we assume the controllability time $T$ to be large enough, i.e $T\geq T_0>0$.
	
	\item[$\bullet$] For $s<1/2$, the equation \eqref{schr_control} is not null controllable.
\end{itemize}

Through the construction of localized solution that we are going to present in this work, it will be possible to give a further confirmation to the above facts. 

The approach that we are going to use for building localized solutions is quite standard. In particular, given a plane wave solution $\ue{u}$, we will look for quasi-solutions to \eqref{main_eq} with an ansatz of the type
\begin{align*}
	\ue{z}(x,t) = \ue{u}(x,t)\ue{a}(x,t),\;\;\;\ue{a}(x,t)=\sum_{j\geq 0} \varepsilon^{pj}a_j(x,t),
\end{align*}
with $p\in\RR$, and where the functions $a_j$ have to be determined. The identification of the $a_j$-s will then be carried out 
imposing
\begin{align*}
	\PP_s\ue{z} = O(\varepsilon^{\infty}), 
\end{align*} 
thus obtaining a series of PDEs in which it will be possible to clearly separate the leading order terms, with respect to $\varepsilon$, from several remainders which will vanish as $\varepsilon\to 0$. This will generate a cascade system for the functions $a_j$, which can then be determined as the solution of certain given Partial Differential Equations.

This paper is organized as follows. In Section \ref{prel_sec}, we will introduce some preliminary notion that we shall use in our analysis. In Section \ref{ansatz_sec}, we will present the construction of the ansatz for the asymptotic expansion of the solutions to our fractional Schr\"odinger equation \ref{main_eq}. In Section \ref{loc_sec}, we will show that the quasi-solution $\ue{z}$ obtained through our procedure is a good approximation of the real solutions $u$ to our original equation. Section \ref{control_sec} will be devoted to the discussion on the application of our methodology to the study of control properties. Finally, in Section \ref{numerical_sec}, we will present some numerical simulations which confirm our theoretical results.

\section{Preliminaries}\label{prel_sec}

Before presenting the construction of our ansatz, we introduce some preliminary facts that we are going to need for our further analysis. 

First of all, a classical result on the Schr\"odinger equation tells us that for all $s\in(0,1)$ the $H^s(\RR)$-norm of the solution $u$ to \eqref{main_eq} is conserved. This is an easy consequence of the skew-adjointness of the operator $i\ffl{s}{}$, which allows to readily check that
\begin{align*}
	0 = \big\langle u_t-i\ffl{s}{u},u+\ffl{s}{u}\big\rangle_{L^2(\RR)} = \frac 12\frac{d}{dt}\left(\norm{u}{L^2(\RR)}^2 + [u]_{H^s(\RR)}^2\right) = \frac 12\frac{d}{dt}\norm{u}{H^s(\RR)}^2.
\end{align*}
In particular, $\norm{u}{H^s(\RR)}$ represents an energy for our equation.

Second, we recall here the definition of \textit{null bicharacteristics}, which will have a fundamental role in our later construction.

Given a general pseudo-differential operator $\Psi$ with principal symbol $\psi=\psi(x,t,\xi,\tau)$, a null bicharacteristic is defined to be a solution of the following system of ordinary differential equations
\begin{align*}
	\begin{cases}
		\dot{x}(\sigma) = \psi_\xi(x(\sigma),t(\sigma),\xi(\sigma),\tau(\sigma))
		\\
		\dot{t}(\sigma) = \psi_\tau(x(\sigma),t(\sigma),\xi(\sigma),\tau(\sigma))
		\\
		\dot{\xi}(\sigma) = -\psi_x(x(\sigma),t(\sigma),\xi(\sigma),\tau(\sigma))
		\\
		\dot{\tau}(\sigma) = -\psi_t(x(\sigma),t(\sigma),\xi(\sigma),\tau(\sigma))
	\end{cases}
\end{align*}
with initial data $(x(0),t(0),\xi(0),\tau(0))=(x_0,t_0,\xi_0,\tau_0)\in\RR^4$ where the value of $\tau_0\in\RR$ is chosen so that $\psi(x_0,t_0,\xi_0,\tau_0)=0$. Then, the projection of a null bicharacteristic to the physical time-space, $(t,x(t))$, traces a curve in $(0,+\infty)\times\RR$ which is called a \textit{ray} of $\Psi$. 

In the case of our fractional Schr\"odinger equation, notice that $\PP_s=i\partial_t+\ffl{s}{}$ is a pseudo-differential operator with symbol $p_s(x,t,\xi,\tau) = \tau - |\xi|^{2s}$. Therefore, the bicharacteristic system is given by
\begin{align}\label{char_syst}
	\begin{cases}
		\dot{x}(\sigma) = \pm 2s|\xi(\sigma)|^{2s-1}, & x(0)=x_0
		\\
		\dot{t}(\sigma) = 1, & t(0)=t_0
		\\
		\dot{\xi}(\sigma) = 0, & \xi(0)=\xi_0
		\\
		\dot{\tau}(\sigma) = 0, & \tau(0)=|\xi_0|^{2s}.
	\end{cases}
\end{align}

Moreover, without losing generality we may assume $t_0=0$. Then, \eqref{char_syst} can be solved explicitly, and we obtain the following expressions for the bicharacteristics
\begin{align*}
	\begin{cases}
		x(\sigma) = x_0 \pm 2s|\xi_0|^{2s-1}\sigma
		\\
		t(\sigma) = \sigma 
		\\
		\xi(\sigma) = \xi_0
		\\
		\tau(\sigma) = |\xi_0|^{2s}.
	\end{cases}
\end{align*}

In particular, the rays of $\PP_s$ are given by the curves $(t,x_0\pm 2s|\xi_0|^{2s-1}t)\in(0,+\infty)\times\RR$. Notice that, as one expects since the operator has constant coefficients, these rays are straight lines.

\section{Construction of the ansatz}\label{ansatz_sec}

This section is devoted to a heuristic exposition of the key ideas leading to the construction of ray-like solutions for our equation. 
We begin by seeking approximate solutions with an ansatz of WKB type with linear phase:
\begin{align}\label{ansatz}
	\ue{z}(x,t) = c(\varepsilon)\ue{u}(x,t)\ue{a}(x,t),\;\;\;\ue{a}(x,t)=\sum_{j\geq 0} \varepsilon^{pj}a_j(x,t),
\end{align}
where $p\in\RR$ and the functions $a_j$ have to be determined. The constant $c(\varepsilon)$, instead, will be chosen asking that the function $\ue{z}$ has $H^s(\RR)$-norm of the order $\mathcal O(1)$. We start by observing that, since for any $\xi_0\in\RR$ we have 
\begin{align}\label{fl_exp}
	\ffl{s}{e^{i\xi_0\varepsilon^{-1} x}} = |\xi_0|^{2s}\varepsilon^{-2s}e^{i\xi_0\varepsilon^{-1} x},
\end{align} 
the plane wave
\begin{align*}
	\ue{u}(x,t):= e^{i\left[\xi_0\varepsilon^{-1}x\,+\,|\xi_0|^{2s}\varepsilon^{-2s}t\,\right]}
\end{align*}
satisfies $\PP_s\ue{u}=0$. Notice that 
\begin{align*}
	e^{i\left[\xi_0\varepsilon^{-1}x\,+\,|\xi_0|^{2s}\varepsilon^{-2s}t\,\right]} = e^{i\xi_0\varepsilon^{-1}\left(x\,-\,\frac{\varepsilon^{1-2s}}{2s}x(t)\,\right)}.
\end{align*}

At this point, for identifying the correct value of the parameter $p$ in \eqref{ansatz}, and for determining the functions $a_j$, we need to compute $\PP_s\ue{z}$ and gather the terms that we obtain according to their order with respect to $\varepsilon$. Moreover, in what follows we shall employ the following expression for the fractional Laplacian of the product of two functions
\begin{align}\label{frac_lapl_product}
	\ffl{s}{(fg)} = f\ffl{s}{g} + g\ffl{s}{f} - I_s(f,g),
\end{align}
where the term $I_s$ is given by (see, e.g., \cite{biccari2017local,ros2014pohozaev})
\begin{align*}
	I_s(f,g)(x):= \ccs\,P.V. \int_{\RR^N} \frac{(f(x)-f(y))(g(x)-g(y))}{|x-y|^{1+2s}}\,dy.
\end{align*}
By means of \eqref{fl_exp} and \eqref{frac_lapl_product}, we can immediately see that
\begin{align}\label{dalambertian_asympt}
	\PP_s\ue{z} = c(\varepsilon)\ue{u}\Big[ i\ue{a} + \ffl{s}{\ue{a}} - K_s(\ue{a})\Big]  
\end{align}
with
\begin{align*}
	K_s(\ue{a}) &= \displaystyle\ccs\,P.V. \int_{\RR}\frac{1-e^{i\frac{\xi_0}{\varepsilon}(y-x)}}{|x-y|^{1+2s}}\Big[\ue{a}(x,t)-\ue{a}(y,t)\Big]\,dy 
	\\[5pt]
	&= \displaystyle\ccs\frac{\xi_0^{2s}}{\varepsilon^{2s}}\,P.V. \int_{\RR}\frac{1-e^{iq}}{|q|^{1+2s}}\bigg[\ue{a}(x,t)-\ue{a}\left(x+\frac{\varepsilon}{\xi_0}q,t\right)\bigg]\,dq.
\end{align*}

Now, employing a fractional Taylor expansion (\cite{jumarie2006modified,trujillo1999riemann}), for any $\beta\in(0,1)$ and $x\leq\theta\leq x+\frac{\varepsilon}{\xi_0}q$ we can write

\begin{align}\label{frac_taylor}
	\displaystyle\ue{a}\left(x+\frac{\varepsilon}{\xi_0}q,t\right) = \ue{a}(x,t) + \frac{\mathcal{D}^{\,\beta}\ue{a}(x,t)}{\Gamma(1+\beta)}\left(\frac{\varepsilon}{\xi_0}q\right)^{\beta} + \frac{\mathcal{D}^{2\beta}\ue{a}(x,t)}{\Gamma(1+2\beta)}\left(\frac{\varepsilon}{\xi_0}q\right)^{2\beta} + \frac{\mathcal{D}^{3\beta}\ue{a}(\theta,t)}{\Gamma(1+3\beta)}\left(\frac{\varepsilon}{\xi_0}q\right)^{3\beta},
\end{align}
where with $\mathcal{D}^{\,\beta}$ we indicate the following fractional derivative of order $\beta$
\begin{align*}
	\mathcal{D}^{\,\beta} f(x):= \frac{1}{\Gamma(1-\beta)}\int_{-\infty}^x \frac{f'(y)}{(x-y)^{\beta}}\,dy, \;\;\;\beta\in(0,1).
\end{align*} 
Thanks to \eqref{frac_taylor}, we get
\begin{align*}
	K_s(\ue{a}) = & \displaystyle-\ccs\frac{\xi_0^{2s-\beta}}{\Gamma(1+\beta)}\varepsilon^{\beta-2s}\mathcal{D}^{\,\beta}\ue{a}(x,t)\,P.V. \int_{\RR}\frac{1-e^{iq}}{|q|^{1+2s}}q^{\beta}\,dq \nonumber
	\\
	&\displaystyle -\ccs\frac{\xi_0^{2s-2\beta}}{\Gamma(1+2\beta)}\varepsilon^{2\beta-2s}\mathcal{D}^{2\beta}\ue{a}(x,t)\,P.V. \int_{\RR}\frac{1-e^{iq}}{|q|^{1+2s}}q^{2\beta}\,dq \nonumber
	\\
	&\displaystyle -\ccs\frac{\xi_0^{2s-3\beta}}{\Gamma(1+3\beta)}\varepsilon^{2\beta-3s}\mathcal{D}^{3\beta}\ue{a}(\theta,t)\,P.V. \int_{\RR}\frac{1-e^{iq}}{|q|^{1+2s}}q^{3\beta}\,dq.
\end{align*}

Moreover, we observe that since $|1-e^{iq}| = 2-2\cos(q)$, for all $q\in\RR$ and $\beta<2s/3$ the integrals in the above expression are finite. In particular, we have 
\begin{align*}
	&\displaystyle\bullet\;\;\left|P.V. \int_{\RR}\frac{1-e^{iq}}{|q|^{1+2s}}q^{\beta}\,dq\,\right| \leq 4\Gamma(\beta-2s-1)\cos\left[\frac{(2s-\beta)\pi}{2}\right], 
	\\[5pt]
	&\displaystyle\bullet\;\;\left|P.V. \int_{\RR}\frac{1-e^{iq}}{|q|^{1+2s}}q^{2\beta}\,dq\,\right| \leq 4\Gamma(2\beta-2s-1)\cos\big[(s-\beta)\pi\big], 
	\\[5pt]
	&\displaystyle\bullet\;\;\left|P.V. \int_{\RR}\frac{1-e^{iq}}{|q|^{1+2s}}q^{3\beta}\,dq\,\right| \leq 4\Gamma(3\beta-2s-1)\cos\left[\frac{(2s-3\beta)\pi}{2}\right]. 
\end{align*}
Therefore, we can rewrite
\begin{align*}
	K_s(\ue{a}) = - \mathcal{C}_\beta \varepsilon^{\beta-2s}\mathcal{D}^{\,\beta}\ue{a}(x,t) - \mathcal{C}_{2\beta} \varepsilon^{2\beta-2s}\mathcal{D}^{2\beta}\ue{a}(x,t) - \mathcal{C}_{3\beta} \varepsilon^{3\beta-2s}\mathcal{D}^{3\beta}\ue{a}(\theta,t),
\end{align*}
with 
\begin{align*}
	\mathcal{C}_{\gamma}:= \ccs\frac{\xi_0^{2s-\gamma}}{\Gamma(1+\gamma)}P.V. \int_{\RR}\frac{1-e^{iq}}{|q|^{1+2s}}q^{\gamma}\,dq,
\end{align*}
and we then obtain
\begin{align*}
	\PP_s\ue{z}(x,t) = c(\varepsilon)\ue{u}\Big[ i\ue{a}_t(x,t) + \ffl{s}{\ue{a}}(x,t) &+ \mathcal{C}_\beta \varepsilon^{\beta-2s}\mathcal{D}^{\,\beta}\ue{a}(x,t) 
	\\
	&+ \mathcal{C}_{2\beta} \varepsilon^{2\beta-2s}\mathcal{D}^{2\beta}\ue{a}(x,t) + \mathcal{C}_{3\beta} \varepsilon^{3\beta-2s}\mathcal{D}^{3\beta}\ue{a}(\theta,t)\Big].  
\end{align*}

Furthermore, let us introduce the following rescaling of the time variable $t\mapsto \tau:=\varepsilon^{2s-\beta}t$. In this way, \eqref{dalambertian_asympt} finally becomes
\begin{align}\label{asympt_exp}
	\PP_s\ue{z} = c(\varepsilon)\varepsilon^{\beta-2s}\ue{u}\sum_{j\geq 0} \varepsilon^{pj}\Big[ i\partial_\tau a_j + \mathcal{C}_\beta \mathcal{D}^{\,\beta} a_j + \varepsilon^{2s-\beta}\ffl{s}{a_j} + \mathcal{C}_{2\beta} \varepsilon^{\beta}\mathcal{D}^{2\beta}a_j + \mathcal{C}_{3\beta} \varepsilon^{2\beta}\mathcal{D}^{3\beta}a_j(\theta,\tau)\Big].  
\end{align} 
For determining from \eqref{asympt_exp} the expression of the functions $a_j$, we will now impose, for any $j\geq 0$, 
\begin{align*}
	\PP_s\ue{z} = O(\varepsilon^{\infty}), 
\end{align*} 
thus obtaining a series of PDEs in which will be possible to clearly separate the leading order terms, with respect to $\varepsilon$, from several remainders which will vanish as $\varepsilon\to 0$. During this procedure, we will also identify the values of the parameters $p$ and $\beta$ that we shall employ.

Let us start firstly with $j=0$. In this case, it is trivial to identify the leading equation, and we immediately have that the function $a_0(x,\tau)$ has to satisfy 
\begin{align}\label{eq_0}
	i\partial_\tau a_0 + \mathcal{C}_\beta \mathcal{D}^{\beta} a_0 = 0.  
\end{align}
For $j=1$, instead, choosing $p=\beta$ we obtain from \eqref{asympt_exp} and \eqref{eq_0}
\begin{align*}
	\varepsilon^\beta\Big[i\partial_\tau a_1 + \mathcal{C}_\beta \mathcal{D}^{\,\beta} a_1 + \mathcal{C}_{2\beta} \mathcal{D}^{2\beta}a_0 &+ \varepsilon^{2s-2\beta}\ffl{s}{a_0} + \mathcal{C}_{3\beta} \varepsilon^{\beta}\mathcal{D}^{3\beta}a_0(\theta,\tau) 
	\\
	&+ \varepsilon^{2s-\beta}\ffl{s}{a_1} + \mathcal{C}_{2\beta} \varepsilon^{\beta}\mathcal{D}^{2\beta}a_1 + \mathcal{C}_{3\beta} \varepsilon^{2\beta}\mathcal{D}^{3\beta}a_1(\theta,\tau)  \Big],
\end{align*} 
and we thus find that the function $a_1(x,\tau)$ has to satisfy
\begin{align}\label{eq_1}
	i\partial_\tau a_1 + \mathcal{C}_\beta \mathcal{D}^{\,\beta} a_1 + \mathcal{C}_{2\beta} \mathcal{D}^{2\beta} a_0 = 0.  
\end{align}
Let us now continue with $j=2$. In this case, we have 
\begin{align*}
	\varepsilon^{2\beta}\Big[\varepsilon^{-2\beta} & \underbrace{(i\partial_\tau a_0 + \mathcal{C}_\beta \mathcal{D}^{\,\beta} a_0)}_{=0} + \varepsilon^{2s-3\beta}\ffl{s}{a_0} + \mathcal{C}_{3\beta} \mathcal{D}^{3\beta}a_0(\theta,\tau)
	\\
	&+ \varepsilon^{-\beta}\underbrace{(i\partial_\tau a_1 + \mathcal{C}_\beta \mathcal{D}^{\,\beta} a_1 + \mathcal{C}_{2\beta} \mathcal{D}^{2\beta}a_0)}_{=0} + \varepsilon^{2s-2\beta}\ffl{s}{a_1} + \mathcal{C}_{3\beta} \varepsilon^{\beta}\mathcal{D}^{3\beta}a_1(\theta,\tau) + \mathcal{C}_{2\beta} \mathcal{D}^{2\beta}a_1
	\\
	&+ i\partial_\tau a_2 + \mathcal{C}_\beta \mathcal{D}^{\,\beta} a_2 + \varepsilon^{2s-\beta}\ffl{s}{a_2} + \mathcal{C}_{2\beta} \varepsilon^{\beta}\mathcal{D}^{2\beta}a_2 + \mathcal{C}_{3\beta} \varepsilon^{2\beta}\mathcal{D}^{3\beta}a_2(\theta,\tau) \Big].  
\end{align*} 
and we thus find that the function $a_2(x,\tau)$ has to satisfy
\begin{align}\label{eq_2}
	i\partial_\tau a_2 + \mathcal{C}_\beta \mathcal{D}^{\,\beta} a_2 + \mathcal{C}_{2\beta} \mathcal{D}^{2\beta} a_1 + \mathcal{C}_{3\beta} \mathcal{D}^{3\beta} a_0(\theta,\tau) = 0.  
\end{align}

For $j=3$, choosing $\beta=s/2$ (observe that this is admissible since $s/2<2s/3$), and employing \eqref{eq_0}, \eqref{eq_1} and \eqref{eq_2}, we obtain
\begin{align*}
	\varepsilon^{\frac{3s}{2}}\Big[i\partial_\tau a_3 + \mathcal{C}_{\frac s2}\mathcal{D}^{\frac{s}{2}}a_3 & + \mathcal{C}_{s} \mathcal{D}^{s}a_2 + \mathcal{C}_{\frac{3s}{2}} \mathcal{D}^{\frac{3s}{2}}a_1(\theta,\tau) + \ffl{s}{a_0}  + \varepsilon^{\frac{s}{2}}\ffl{s}{a_1} + \varepsilon^{s}\ffl{s}{a_2}  
	\\
	&+ \mathcal{C}_{\frac{3s}{2}} \varepsilon^{\frac{s}{2}}\mathcal{D}^{\frac{3s}{2}}a_2(\theta,\tau)+ \varepsilon^{\frac{3s}{2}}\ffl{s}{a_3} + \mathcal{C}_{s}\varepsilon^{\frac{s}{2}}\mathcal{D}^{s}a_3 + \mathcal{C}_{\frac{3s}{2}} \varepsilon^{s}\mathcal{D}^{\frac{3s}{2}}a_3(\theta,\tau)\Big].  
\end{align*}
Therefore, the function $a_3(x,\tau)$ has to satisfy
\begin{align*}
	i\partial_\tau a_3 + \mathcal{C}_{\frac s2}\mathcal{D}^{\frac{s}{2}}a_3 + \mathcal{C}_{s} \mathcal{D}^{s}a_2 + \mathcal{C}_{\frac{3s}{2}} \mathcal{D}^{\frac{3s}{2}}a_1(\theta,\tau) + \ffl{s}{a_0}.
\end{align*}

Furthermore, we have identified both the parameters $p$ and $\beta$. Thus, we can iterate the procedure described above and we find the following expression for the quasi-solution $\ue{z}$
\begin{align}\label{ansatz_prel}
	\ue{z}(x,t) = c(\varepsilon)e^{i\left[\xi_0\varepsilon^{-1}x\,+\,|\xi_0|^{2s}\varepsilon^{-2s}t\right]}\sum_{j\geq 0}\varepsilon^{\frac{s}{2}j}a_j\left(x,\varepsilon^{\frac{3}{2}s}t\right),
\end{align}  
where the functions $a_j$, $j\geq 0$, are the solutions of the following cascade system
\begin{align}\label{cascade_system}
	\begin{cases}
		i\partial_\tau a_0 + \mathcal{C}_{\frac s2} \mathcal{D}^{\frac{s}{2}} a_0 = 0  
		\\
		i\partial_\tau a_1 + \mathcal{C}_{\frac s2} \mathcal{D}^{\frac{s}{2}} a_1 + \mathcal{C}_{s} \mathcal{D}^{s} a_0 = 0  
		\\
		i\partial_\tau a_2 + \mathcal{C}_{\frac s2} \mathcal{D}^{\frac{s}{2}} a_2 + \mathcal{C}_{s} \mathcal{D}^{s} a_1 + \mathcal{C}_{\frac{3s}{2}} \mathcal{D}^{\frac{3s}{2}} a_0(\theta,\tau) = 0, & \displaystyle x\leq\theta\leq x+\frac{\varepsilon}{\xi_0}q
		\\
		i\partial_\tau a_j + \mathcal{C}_{\frac s2}\mathcal{D}^{\frac{s}{2}}a_j + \mathcal{C}_{s} \mathcal{D}^{s}a_{j-1} + \mathcal{C}_{\frac{3s}{2}} \mathcal{D}^{\frac{3s}{2}}a_{j-2}(\theta,\tau) + \ffl{s}{a_{j-3}}, & j\geq 3
		\\
		&\displaystyle x\leq\theta\leq x+\frac{\varepsilon}{\xi_0}q.  
	\end{cases}
\end{align}  

Notice that the classical Borel's theorem (see, e.g., \cite[Chapter I, Theorem 1.2.6]{hormander1990analysis}) allows one to choose a $C^\infty$-smooth function $\ue{a}(x,\tau)$ which has the expansion at $\varepsilon = 0$
\begin{align*}
	\ue{a}(x,\tau)=\sum_{j\geq 0} \varepsilon^{\frac s2 j}a_j(x,\tau).
\end{align*}

This, in particular, justifies the formal computations presented above. Consequently, we conclude that the function $\ue{z}(x,t)$
constructed in \eqref{ansatz_prel} is $C^{\infty}$-smooth and it is an infinitely accurate solution of the fractional Schr\"odinger equation in the sense that $\mathcal P_s\ue{z} = O(\varepsilon^\infty)$ in $\RR\times (0,+\infty)$.

For concluding our construction, let us now compute the value of the normalization constant $c(\varepsilon)$. This is done asking that $\norm{\ue{z}}{H^s(\RR)}=\mathcal O(1)$ as $\varepsilon\to 0^+$. First of all, from the construction that we just presented we get 
\begin{align*}
	\ue{z}= c(\varepsilon)e^{i\left[\xi_0\varepsilon^{-1}x\,+\,|\xi_0|^{2s}\varepsilon^{-2s}t\,\right]}\left(a_0 + \mathcal O(\varepsilon^{\frac s2})\right).
\end{align*}

Hence, in what follows we can consider only the term for $j=0$ in our expansion. Moreover, since we are working on the whole $\RR$, we have
\begin{align*}
	\norm{\ue{z}}{H^s(\RR)} &= \left(\norm{\ue{z}}{L^2(\RR)}^2 + \norm{\ffl{\frac s2}{\ue{z}}}{L^2(\RR)}^2\right)^{\frac 12} 
	\\
	&= \left(\norm{c(\varepsilon)\ue{u}a_0}{L^2(\RR)}^2 + \norm{c(\varepsilon)\ue{u}\left(\frac{|\xi_0|^s}{\varepsilon^s}a_0 + \ffl{\frac s2}{a_0} + R\right)}{L^2(\RR)}^2\right)^{\frac 12},
\end{align*}
where, as we did before, the reminder term $R$ can be written in the form
\begin{align*}
	R=\frac{1}{\varepsilon^s} \frac{|\xi_0|^s}{\Gamma(1+s)}\left(P.V.\,\int_\RR \frac{1-e^{iq}}{|q|^{1+2s}}q^s\,dq\right) \mathcal D^sa_0 = \frac{c(s)}{\varepsilon^s}\mathcal D^sa_0.
\end{align*}
Therefore, we obtain 
\begin{align*}
	\norm{\ue{z}}{H^s(\RR)} = \left(\norm{c(\varepsilon)\ue{u}a_0}{L^2(\RR)}^2 + \norm{c(\varepsilon)\ue{u}\Big[\varepsilon^{-s}\big(|\xi_0|^s a_0 + c(s)\mathcal D^sa_0\big) + \ffl{\frac s2}{a_0} \Big]}{L^2(\RR)}\right)^{\frac 12}.
\end{align*}
Thus, choosing the normalization constant as $c(\varepsilon)=\varepsilon^s$, we immediately obtain
\begin{align*}
	\norm{\ue{z}}{H^s(\RR)} &= \left(\varepsilon^s\norm{\ue{u}a_0}{L^2(\RR)}^2 + \norm{\ue{u}\Big(|\xi_0|^s a_0 + c(s)\mathcal D^sa_0 + \varepsilon^s\ffl{\frac s2}{a_0} \Big)}{L^2(\RR)}\right)^{\frac 12}
	\\
	&= \left(\varepsilon^s\norm{a_0}{L^2(\RR)}^2 + \norm{|\xi_0|^s a_0 + c(s)\mathcal D^sa_0 + \varepsilon^s\ffl{\frac s2}{a_0} }{L^2(\RR)}\right)^{\frac 12} = \mathcal O\big(\norm{a_0}{H^s(\RR)}\big).
\end{align*}
In this way, we find the final expression for our quasi-solution, which reads as follows 
\begin{align}\label{z-eps}
	\ue{z}(x,t) = \varepsilon^s e^{i\left[\xi_0\varepsilon^{-1}x\,+\,|\xi_0|^{2s}\varepsilon^{-2s}t\right]}\sum_{j\geq 0}\varepsilon^{\frac{s}{2}j}a_j\left(x,\varepsilon^{\frac{3}{2}s}t\right).
\end{align} 

Moreover, \eqref{cascade_system} is uniquely solvable with initial conditions imposed at $t = 0$ and this, of course, allows to identify the expressions of the functions $a_j$. In more detail, it is possible to compute quasi-solutions to the initial value problem 
\begin{align}\label{schr_asympt}
	\begin{cases}
		iu_t + \ffl{s}{u}=0, \;\,\; (x,t)\in\RR\times(0,+\infty)
		\\
		u(x,0)=u_0(x),
	\end{cases}	
\end{align}
by means of the following procedure. 

\subsubsection*{\bf{Step 1. $j=0$}}

Given any $g_0\in L^2(\RR)$, we start by considering the equation
\begin{align}\label{a0_pb}
	\begin{cases}
		i\partial_\tau a_0 + \mathcal{C}_{\frac s2} \mathcal{D}^{\frac{s}{2}} a_0 = 0, \;\;\; (x,\tau)\in\RR\times(0,+\infty)  
		\\
		a_0(x,0)=g_0(x).
	\end{cases}
\end{align}  

Recall that $\tau=\varepsilon^{\frac 32 s}t$. Moreover, it is known that the solution to \eqref{a0_pb} can be computed explicitly and it is given by 
\begin{align}\label{a0}
	a_0(x,\tau) = \int_\RR \mathcal{G}(y,\tau)g_0(x-y)\,dy,
\end{align}
where with $\mathcal{G}$ we refer to the Green's function defined as the solution to 
\begin{align}\label{green}
	\begin{cases}
		i\partial_\tau \mathcal{G} + \mathcal{C}_{\frac s2} \mathcal{D}^{\frac{s}{2}} \mathcal{G} = 0, \;\;\; (x,\tau)\in\RR\times(0,+\infty)  
		\\
		\mathcal{G}(x,0)=\delta(x).
	\end{cases}
\end{align}  

Equation \eqref{green}  can be easily solved with the help of the Fourier transform. With this in mind, let us recall that we have (see, e.g., \cite[Page 59, Equation A.13]{metzler2000random})
\begin{align*}
	\mathcal{F}\left[\mathcal{D}^{\frac s2}a_0\right](k,\tau) = -|k|^{\frac s2} \mathcal{F}\left[a_0\right](k,\tau)
\end{align*}
In particular, the function $\mathcal{G}$ is given by
\begin{align*}
	\mathcal{G}(x,\tau) = \frac{1}{2\pi}\int_\RR e^{ik x}e^{-i\mathcal{C}_{\frac s2}|k|^{\frac{s}{2}}\tau}\,dk.
\end{align*}
so one obtains from \eqref{a0} the following expression for the solution to \eqref{a0_pb}
\begin{align*}
	a_0(x,\tau) = \frac{1}{2\pi}\int_\RR \left(\int_\RR e^{ik x}e^{-i\mathcal{C}_{\frac s2}|k|^{\frac{s}{2}}\tau}\,dk\right)g_0(x-y)\,dy.
\end{align*}

\subsubsection*{\bf{Step 2. $j=1$}}

Once the expression of $a_0$ is determined, the next component in the expansion, corresponding to $j=1$ in \eqref{z-eps}, is obtained solving the non-homogeneous equation  
\begin{align}\label{a1_pb}
	\begin{cases}
		i\partial_\tau a_1 + \mathcal{C}_{\frac s2} \mathcal{D}^{\frac{s}{2}} a_1 = h, \;\;\; (x,\tau)\in\RR\times(0,+\infty)  
		\\
		a_1(x,0)=g_1(x),
	\end{cases} 
\end{align} 
where we indicated with $h$ the function
\begin{align*}
	h(x,\tau) = -\mathcal{C}_s\mathcal{D}^sa_0(x,\tau).
\end{align*}

Moreover, also the solution of \eqref{a1_pb} can be obtained explicitly employing classical splitting techniques and writing $a_1(x,t)=a_{1,1}(x,t)+a_{1,2}(x,t)$ with 
\begin{align}\label{a11_pb}
	\begin{cases}
		i\partial_\tau a_{1,1} + \mathcal{C}_{\frac s2} \mathcal{D}^{\frac{s}{2}} a_{1,1} = h, \;\;\; (x,\tau)\in\RR\times(0,+\infty)  
		\\
		a_{1,1}(x,0)=0
	\end{cases}
\end{align}
and
\begin{align}\label{a12_pb}
	\begin{cases}
		i\partial_\tau a_{1,2} + \mathcal{C}_{\frac s2} \mathcal{D}^{\frac{s}{2}} a_{1,2} = 0, \;\;\; (x,\tau)\in\RR\times(0,+\infty)  
		\\
		a_{1,2}(x,0)=g_1(x).
	\end{cases}
\end{align}

Notice that the solution to \eqref{a11_pb} can be obtained through the variation of constants formula, while the solution to \eqref{a12_pb} is computed as in Step 1.

\subsubsection*{\bf{Step 3. $j\geq 2$}}

Starting from $j=2$, in the equations determining $a_j$ appears terms in the variable $\theta$, coming from the remainders of the fractional Taylor expansion. Notice, however, that the support of $\theta$ is the interval $[x,x+\varepsilon q/\xi_0]$ and that we are interested in analyzing the behavior of the quasi-solutions as $\varepsilon\to 0^+$. Hence, without introducing significative errors, we can assume $\theta=x$. Therefore, we obtain that each $a_j$, $j\geq 2$, is the solution to the non-homogeneous problem
\begin{align}\label{aj_pb}
	\begin{cases}
		i\partial_\tau a_j + \mathcal{C}_{\frac s2} \mathcal{D}^{\frac{s}{2}} a_j = H_j, \;\;\; (x,\tau)\in\RR\times(0,+\infty)  
		\\
		a_j(x,0)=g_j(x),
	\end{cases}
\end{align} 
where the right hand sides $H_j$ are determined in terms of the functions $a_i$, $i=0,\ldots,j-1$. In particular, \eqref{aj_pb} can be solved again as we did for $a_1$ in Step 2, and we thus obtain explicit expressions for all the functions $a_j$, $j\geq 0$.

\section{Localization of the quasi-solutions along rays}\label{loc_sec}

This section is devoted to showing that the quasi-solutions that can be computed by using the ansatz that we obtained are in fact localized along rays. 

\begin{theorem}\label{loc_thm}
Let $\uin\in L^2(\RR)$ and let $\ue{z}$ be constructed employing the expansion \eqref{z-eps}, with initial data $g_j\in L^2(\RR)$. Then, for any $\varepsilon>0$ we have:
\begin{enumerate}
	\item The functions $\ue{z}$ are approximate solutions to \eqref{main_eq}:
	\begin{align}\label{in_dat_asympt}
		\norm{u_0(x)-\ue{z}(x,0)}{L^2(\RR)} = \mathcal{O}(\varepsilon^{\frac{1}{2}}),
	\end{align}
	\begin{align}\label{sol_asympt}
		\norm{u(x,t)-\ue{z}(x,t)}{L^2(\RR)} = \mathcal{O}(\varepsilon^{\frac 12}).
	\end{align}
	\item The initial energy of $\ue{z}$ remains bounded as $\varepsilon\to 0$, i.e.
	\begin{align}\label{in_energy_est}
		\norm{\ue{z}(x,0)}{H^s(\RR)}^2 \approx 1.
	\end{align}
	\item The energy of $\ue{z}$ is exponentially small off the ray $(t,x(t))$:
	\begin{align}\label{energy_ray}
		\int_{|x-x(t)|>\varepsilon^{\frac 14}} \left|\ffl{\frac s2}{\ue{z}}(x,t)\right|^2\,dx = \mathcal O(\varepsilon^{\frac 14}).
	\end{align}	
\end{enumerate}
\end{theorem}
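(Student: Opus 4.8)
The plan is to prove the three assertions in turn. Item (2) is essentially a restatement of the normalization computation carried out just before \eqref{z-eps}; item (1) follows by combining the infinite-order consistency $\PP_s\ue z=O(\varepsilon^\infty)$ with the $L^2$-conservation of the fractional Schr\"odinger flow; and item (3), the substantive part, is obtained by first reducing the energy density of $\ue z$ to that of the leading envelope $a_0$ and then exploiting the explicit Green-function representation of $a_0$ from Step~1. For (2), with $c(\varepsilon)=\varepsilon^s$ one reads off from the computation preceding \eqref{z-eps} that, evaluating at $t=0$ (so $\tau=0$ and $a_0(\cdot,0)=g_0$), $\norm{\ue z(x,0)}{H^s(\RR)}^2=\varepsilon^{2s}\norm{g_0}{L^2(\RR)}^2+\norm{\,|\xi_0|^s g_0+c(s)\mathcal D^{s}g_0+\varepsilon^s\ffl{s/2}{g_0}\,}{L^2(\RR)}^2+O(\varepsilon^{s/2})$, so that as $\varepsilon\to0^+$ the $\varepsilon$-dependent pieces vanish and one is left with the strictly positive constant $\norm{\,|\xi_0|^s g_0+c(s)\mathcal D^{s}g_0\,}{L^2(\RR)}^2$ (positive because the associated Fourier multiplier vanishes at most on a null set), which is $\approx1$ after the natural normalization of $g_0$. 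The only routine point is that $\mathcal D^{s}g_0,\ffl{s/2}{g_0}\in L^2(\RR)$, which holds since $g_0\in H^s(\RR)$.

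For (1), put $w:=u-\ue z$. By the Borel-resummation argument of Section~\ref{ansatz_sec}, $\PP_s\ue z=O(\varepsilon^\infty)$ in $\RR\times(0,+\infty)$, hence $w$ solves $iw_t+\ffl{s}{w}=-\PP_s\ue z=:F$ with $\norm{F(\cdot,t)}{L^2(\RR)}=O(\varepsilon^\infty)$ locally uniformly in $t$, and $w(\cdot,0)=u_0-\ue z(\cdot,0)$. Since $i\ffl{s}{}$ is skew-adjoint on $L^2(\RR)$ its propagator is unitary, so Duhamel's formula gives $\norm{w(\cdot,t)}{L^2(\RR)}\le\norm{u_0-\ue z(\cdot,0)}{L^2(\RR)}+\int_0^t\norm{F(\cdot,\sigma)}{L^2(\RR)}\,d\sigma=\norm{u_0-\ue z(\cdot,0)}{L^2(\RR)}+O(\varepsilon^\infty)$, which reduces \eqref{sol_asympt} to \eqref{in_dat_asympt}. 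For \eqref{in_dat_asympt}, at $t=0$ one has $\ue z(x,0)=\varepsilon^s e^{i\xi_0\varepsilon^{-1}x}\sum_{j\ge0}\varepsilon^{sj/2}g_j(x)$; with the data $g_j$ chosen so that the leading term reproduces $u_0$ and the $g_j$ with $j\ge1$ match the lower-order corrections generated by the cascade \eqref{cascade_system}, the difference $u_0-\ue z(\cdot,0)$ is governed in $L^2(\RR)$ by the first omitted term of the expansion, which is of the claimed order; I would carry the bookkeeping only far enough to isolate that term.

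For (3) I would first reduce to the envelope. Applying \eqref{frac_lapl_product} with $s$ replaced by $s/2$, using $\ffl{s/2}{\ue u}=|\xi_0|^s\varepsilon^{-s}\ue u$ and the bounds on the singular-integral remainder already established in Section~\ref{ansatz_sec}, one obtains $\ffl{s/2}{\ue z}(x,t)=\ue u(x,t)\big(|\xi_0|^s a_0+c(s)\mathcal D^{s}a_0\big)\!\left(x,\varepsilon^{3s/2}t\right)+O(\varepsilon^{s/2})$ in $L^\infty_tL^2_x$; since $|\ue u|\equiv1$, this turns \eqref{energy_ray} into the statement that the mass of $a_0$ and of $\mathcal D^{s}a_0$ lying outside an $\varepsilon^{1/4}$-neighbourhood of the ray $x(t)$ is of the claimed order. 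For the latter I would use the representation $a_0(x,\tau)=(\mathcal G(\cdot,\tau)\ast g_0)(x)$ of Step~1, with $\widehat{\mathcal G}(k,\tau)=e^{-i\mathcal C_{s/2}|k|^{s/2}\tau}$: because the evaluation time $\tau=\varepsilon^{3s/2}t\to0$, the propagator is close to the identity and one controls $\norm{a_0(\cdot,\tau)-g_0}{L^2(\RR)}^2=\int_\RR\big|e^{-i\mathcal C_{s/2}|k|^{s/2}\tau}-1\big|^2|\widehat{g_0}(k)|^2\,dk$ quantitatively (likewise with $\mathcal D^{s}$ inserted); combining this with the chosen localization of $g_0$ about $x_0$ and the explicit position of the ray $x(t)=x_0\pm2s|\xi_0|^{2s-1}t$ from \eqref{char_syst} yields the bound. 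The exponentially small version stated in the text corresponds to taking $g_0$ concentrated near $x_0$ at a scale finer than $\varepsilon^{1/4}$, so that almost none of its mass escapes the $\varepsilon^{1/4}$-window once the small dispersive spreading over time $\tau$ is accounted for.

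The step I expect to be the real obstacle is precisely this last one: quantifying the dispersive spreading of $a_0$ against the $\varepsilon^{1/4}$ scale, and in particular checking that the curve along which the envelope concentrates agrees, to the order in $\varepsilon$ relevant here, with the bicharacteristic ray of \eqref{char_syst}. Two technical inputs feed into it: (i) a rigorous justification of the substitution $\theta\mapsto x$ in the cascade \eqref{cascade_system}, which uses an $L^2$-continuity estimate for $\mathcal D^{3s/2}$ together with the fact that the $\theta$-interval has length $O(\varepsilon)$; and (ii) the boundedness of the one-sided fractional derivatives $\mathcal D^{\gamma}$ (with Fourier multiplier $-|k|^{\gamma}$ in the convention used here) between the relevant Sobolev spaces, which is what makes the error terms collected above genuinely of lower order.
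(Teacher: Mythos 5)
Your treatment of item (2) and of the reduction of \eqref{sol_asympt} to \eqref{in_dat_asympt} (Duhamel with the unitary propagator, which is the same as the paper's energy estimate applied to $u-\ue{z}$ together with $\PP_s(u-\ue{z})=\mathcal O(\varepsilon^\infty)$) coincides with the paper's argument. The two places where the proposal has genuine gaps are precisely the two quantitative rates, and in both cases the mechanism you invoke does not produce the exponent stated in the theorem. For \eqref{in_dat_asympt} you argue that $u_0-\ue{z}(\cdot,0)$ is ``governed by the first omitted term of the expansion''; but the expansion proceeds in powers of $\varepsilon^{s/2}$, so the first omitted term is $\mathcal O(\varepsilon^{s/2})$, which for every $s<1$ decays \emph{more slowly} than the claimed $\varepsilon^{1/2}$. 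However far you carry the bookkeeping, the tail of the series cannot close at the rate $\varepsilon^{1/2}$. The paper extracts the factor $\varepsilon^{1/2}$ not from the tail but from the oscillatory prefactor $e^{i\xi_0 x/\varepsilon}$ multiplying $\uin-(g_0+\mathcal O(\varepsilon^{s/2}))$, rewriting the norm as $\mathcal O(\varepsilon^{1/2})\norm{e^{ix}\big[\uin-(g_0+\mathcal O(\varepsilon^{s/2}))\big]}{L^2(\RR)}$; your argument has no counterpart for that step.

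For item (3) you take a genuinely different, and unfinished, route. The paper's proof is a one-line change of variables rescaling $x-x(t)$ by $\varepsilon^{1/4}$: the factor $\varepsilon^{1/4}$ appears as the Jacobian of the substitution, and the remaining integral over $|z|>1$ is bounded by $\norm{a_0}{H^s(\RR)}$, with no hypothesis on the spatial concentration of $g_0$. Your plan instead requires $g_0$ to be concentrated near $x_0$ at a scale finer than $\varepsilon^{1/4}$ --- an assumption absent from the theorem, which only asks $g_j\in L^2(\RR)$ --- and then rests on a quantitative dispersive estimate for the propagator $e^{-i\mathcal C_{s/2}|k|^{s/2}\tau}$ against the $\varepsilon^{1/4}$ window, a step you yourself identify as ``the real obstacle'' and do not carry out. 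As written, item (3) is a programme rather than a proof: the key localization estimate is missing, and the route chosen needs an additional hypothesis on the data before it could be completed.
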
 

\begin{proof}
\textbf{Step 1: Approximation of the real solution.} First of all, from the definition \eqref{z-eps} of $\ue{z}$ we have 
\begin{align*}
	\ue{z}(x,0) = e^{i\frac{\xi_0}{\varepsilon} x}\sum_{j\geq 0} \varepsilon^{\frac s2 j}a_j(x,0) = e^{i\frac{\xi_0}{\varepsilon} x}\sum_{j\geq 0} \varepsilon^{\frac s2 j}g_j(x).
\end{align*}	
By means of the above expression we obtain
\begin{align*}
	\norm{u_0-\ue{z}(x,0)}{L^2(\RR)} &= \norm{u_0-e^{i\frac{\xi_0}{\varepsilon} x}\left(g_0 + \mathcal O(\varepsilon^{\frac s2})\right)}{L^2(\RR)} = \norm{e^{i\frac{\xi_0}{\varepsilon} x}\Big[\uin-\left(g_0+\mathcal O(\varepsilon^{\frac s2})\right)\Big]}{L^2(\RR)}
	\\
	&= \mathcal O(\varepsilon^{\frac 12})\norm{e^{ix}\Big[\uin-\left(g_0+\mathcal O(\varepsilon^{\frac s2})\right)\Big]}{L^2(\RR)} = \mathcal O(\varepsilon^{\frac 12})\left(\norm{\uin-g_0}{L^2(\RR)}+\mathcal O(\varepsilon^{\frac s2})\right).
\end{align*}
	
Therefore, since both $\uin$ and $g_0$ belong to $L^2(\RR)$, we immediately have \eqref{in_dat_asympt}. In order to prove \eqref{sol_asympt}, let us firstly remark that, by means of classical PDE techniques, we can obtain the following energy estimate for the solution to \eqref{main_eq} (see, e.g., \cite{cazenave2003semilinear})
\begin{align}\label{energy_est}
	\norm{u(t)}{L^2(\RR)}\leq C\left(\norm{u_0}{L^2(\RR)} + \norm{\PP_s u(t)}{L^2(\RR)}\right). 
\end{align}
Moreover, notice that since $\PP_s$ is linear, by construction of $\ue{z}$ we have 
\begin{align*}
	\PP_s(u-\ue{z}) = \PP_s u - \PP_s\ue{z} = - \PP_s\ue{z} = \mathcal O(\varepsilon^\infty).
\end{align*}
Therefore, applying \eqref{energy_est} we immediately get
\begin{align*}
	\norm{u(x,t)-\ue{z}(x,t)}{L^2(\RR)} \leq C\left(\norm{u_0-\ue{z}(x,0)}{L^2(\RR)} + \norm{\PP_s (u-\ue{z})(t)}{L^2(\RR)}\right) = \mathcal O(\varepsilon^{\frac 12}) + \mathcal O(\varepsilon^\infty),
\end{align*}
and this clearly yields \eqref{sol_asympt}. 
	
\textbf{Step 2: Initial energy estimate.} Repeating the computations developed in Section \ref{ansatz_sec} for deriving the appropriate value of the normalization constant $c(\varepsilon)$, we can easily check that 
\begin{align*}
	\norm{\ue{z}(x,0)}{H^s(\RR)}^2 \approx \norm{g_0}{H^s(\RR)}^2.
\end{align*}
Hence, up to a rescaling in the initial datum $g_0\mapsto g_0/\norm{g_0}{H^s(\RR)}$, we have \eqref{in_energy_est}.
	
\textbf{Step 3: Localization along rays.} Following the same approach that we used in precedence, and employing the change of variables $\varepsilon^{\frac 14}(x-x(t))\mapsto z$, we have
\begin{align*}
	\int_{|x-x(t)|>\varepsilon^{\frac 14}} & \left|\ffl{\frac s2}{\ue{z}}\right|^2\,dx 
	\\
	& \approx \varepsilon^{\frac 14}\int_{|z|>1} \left|e^{i\left[\frac{\xi_0}{\varepsilon}\left(x(t)+\varepsilon^{\frac 14}z\right)+\left(\frac{|\xi_0|}{\varepsilon}\right)^{2s}t\right]}\Big(|\xi_0|^s a_0 + c(s)\mathcal D^sa_0 + \varepsilon^s\ffl{\frac s2}{a_0} \Big)\right|^2\,dz
	\\
	&\leq \varepsilon^{\frac 14}\max\big\{|\xi_0|^s,c(s)\big\}\norm{a_0}{H^s(\RR)} + \varepsilon^{\frac 14+s}\norm{a_0}{H^s(\RR)} = \mathcal O(\varepsilon^{\frac 14}).
\end{align*}	
This concludes the proof.
\end{proof}

\section{Application to the analysis of control properties}\label{control_sec}

In this Section, we present an informal discussion on the application of the WKB construction obtained in this paper to the null-controllability of equation \eqref{main_eq}.  

It is by now well-known that geometric optics constructions for wave-like equations can be used for deriving controllability properties. These properties are usually formulated by means of an observability inequality, in which the total energy of the solutions is uniformly estimated by a partial measurement (typically, the portion of energy localized in a subset of the domain or of its boundary). In this framework, the existence of localized solutions gives sharp necessary conditions for the observability property to hold. Indeed, as it was remarked by Ralston in \cite{ralston1982gaussian}, in order to observe these solutions the observation set must intersect every ray. If this were not the case, one could construct a quasi solution along a ray that would not hit the observation set and which, being negligible outside an arbitrarily small neighborhood of the ray, could not be observed. This is the so-called Geometric Control Condition (GCC), which has been proved to be \textit{almost} sufficient by Bardos, Lebeau and Rauch in \cite{bardos1992sharp}, and necessary by Burq and G\'erard in \cite{burq1997condition}.

These principles applies also in the context of our fractional Schr\"odinger equation. Indeed, according to Theorem \ref{loc_thm} the quasi-solution $\ue{z}$ to \eqref{main_eq} are concentrated along the rays of geometric optics obtained by solving the Hamiltonian system \eqref{char_syst}. Therefore, they propagate with the group velocity of the plane wave solutions, which can be analyzed in terms of $s$ and of the frequency $\xi_0$. 

Recall that the rays for our equation have been introduced in Section \ref{prel_sec}, and in the physical domain are given by the curves  
\begin{align*}
	x(t) = x_0 \pm 2s|\xi_0|^{2s-1} t.
\end{align*}

Hence, the velocity of propagation of the quasi-solution $\ue{z}$ coincides with the one of the rays, and it is given by the quantity 
\begin{align*}
	v = |\dot{x}(t)| = 2s|\xi_0|^{2s-1}.
\end{align*}

Then, depending on the value of the parameter $s$, we can have there different behaviors (see also Figure \ref{velocity_fig}) 

\begin{itemize}
	\item[$\bullet$] For $s<1/2$, since $1-2s<0$, the velocity of propagation of the solutions decreases with the frequency.
	\item[$\bullet$] For $s=1/2$, the velocity of propagation of the solutions remains constant ($v=1$).
	\item[$\bullet$] For $s>1/2$, since $1-2s>0$, the velocity of propagation of the solutions increases with the frequency.
\end{itemize}

\begin{figure}[!h]
	\centering
	\subfloat[$s=0.1$]{
		\includegraphics[scale=0.3]{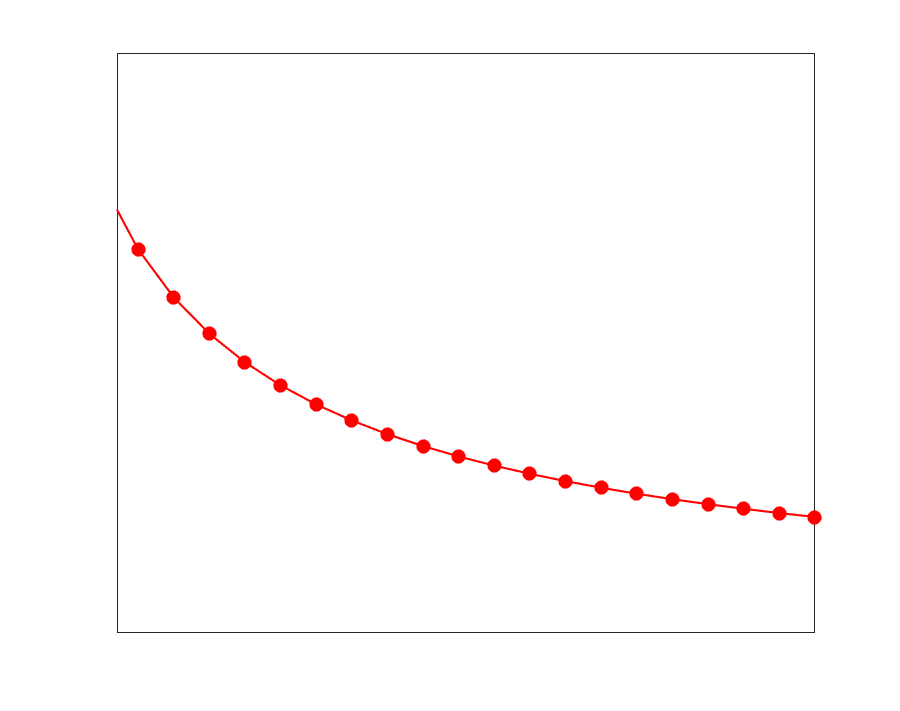}
		}\hspace{0.2cm}
		\subfloat[$s=0.5$]{
			\includegraphics[scale=0.3]{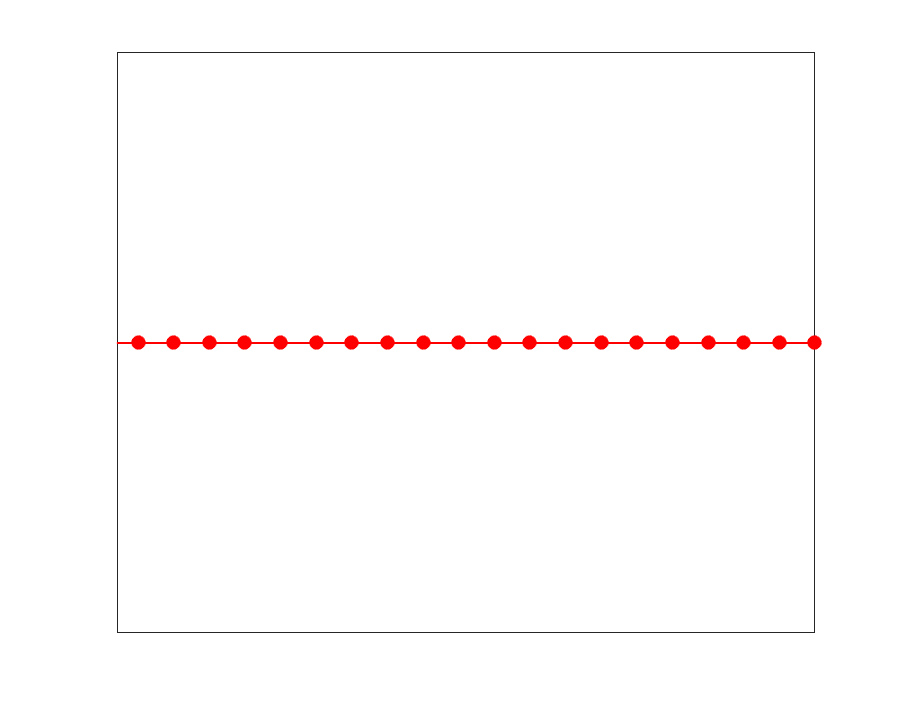}
		}
		\subfloat[$s=0.9$]{
			\includegraphics[scale=0.3]{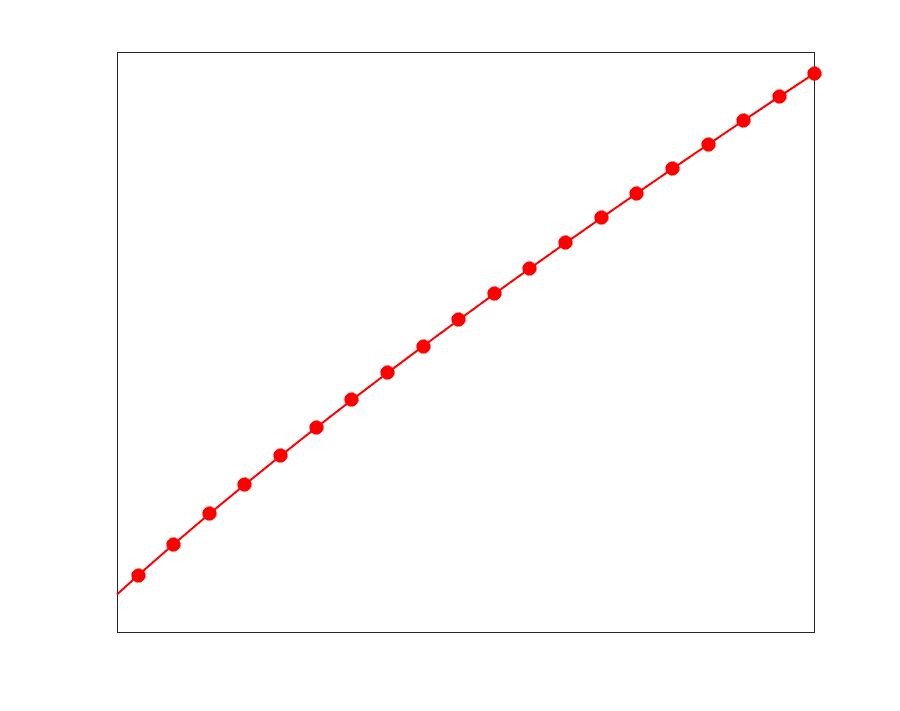}
		}\caption{Velocity of the rays as a function of $\xi_0$ for different values of $s\in(0,1)$}\label{velocity_fig}
\end{figure}

In view of that, the high frequency solutions are traveling faster and faster, for $s>1/2$, and slower and slower, for $s<1/2$. As a consequence, for $s>1/2$ the rays will travel sufficiently fast and are observable in any finite time $T>0$. For $s=1/2$, the velocity of propagation being constant, a minimum time $T_0$ is needed for the observation of all the rays. Finally, for $s<1/2$ the high frequency rays may not reach the control region, thus implying the failing of controllability properties. 

Lastly, we mention that this same behaviors have already been observed also in a multi-dimensional framework, in \cite{biccari2014internal}. Therefore, our informal discussion is not surprising, and it is somehow a confirmation of the results contained in the aforementioned paper.

\section{Numerical results}\label{numerical_sec}

We present here some simulations showing the propagation of solutions to the fractional Schr\"odinger equation \eqref{main_eq} corresponding to initial data in the form \eqref{in_dat}. 

For the numerical resolution of the equation, we employed a uniform mesh in the space variable and a FE discretization of the fractional Laplacian, obtained following the methodology presented in \cite{biccari2017controllability}. Moreover, we used a Crank-Nicholson scheme in time, which is known to be stable for the Schr\"odinger equation (see, e.g., \cite{askar1978explicit}). The initial data $u_0$ has been chosen as  
\begin{align*}
	u_0(x) = e^{-\frac \gamma2 (x-x_0)^2}e^{i\frac {\xi_0}{\varepsilon} x},
\end{align*} 
where the profile $u_{\textrm{in}}(x)$ is given by a Gaussian with standard deviation measured in terms of the parameter $\gamma$, which is related to the mesh size $h$. In particular we chose $\gamma = h^{-0.9}$. Finally, for the oscillations we considered frequencies $\xi_0=\pi^2/16$ and $\xi_0=2\pi^2$. We mention that this kind of approach is in many aspects analogous to what is done in \cite{biccari2018propagation,marica2015propagation}, where a similar analysis has been developed for the numerical finite difference solutions of one and two-dimensional waves, both with constant and variable coefficients.

In Figure \ref{plot_01}, we show the plots for $\xi_0=2\pi^2$ and different values of $s\in(0,1)$. The space domain has been chosen to be the interval $(-1,1)$, while we considered a time interval of $5$ seconds. 

\begin{figure}[h]
	\centering 
	\subfloat[Initial data]{
		\includegraphics[scale=0.15]{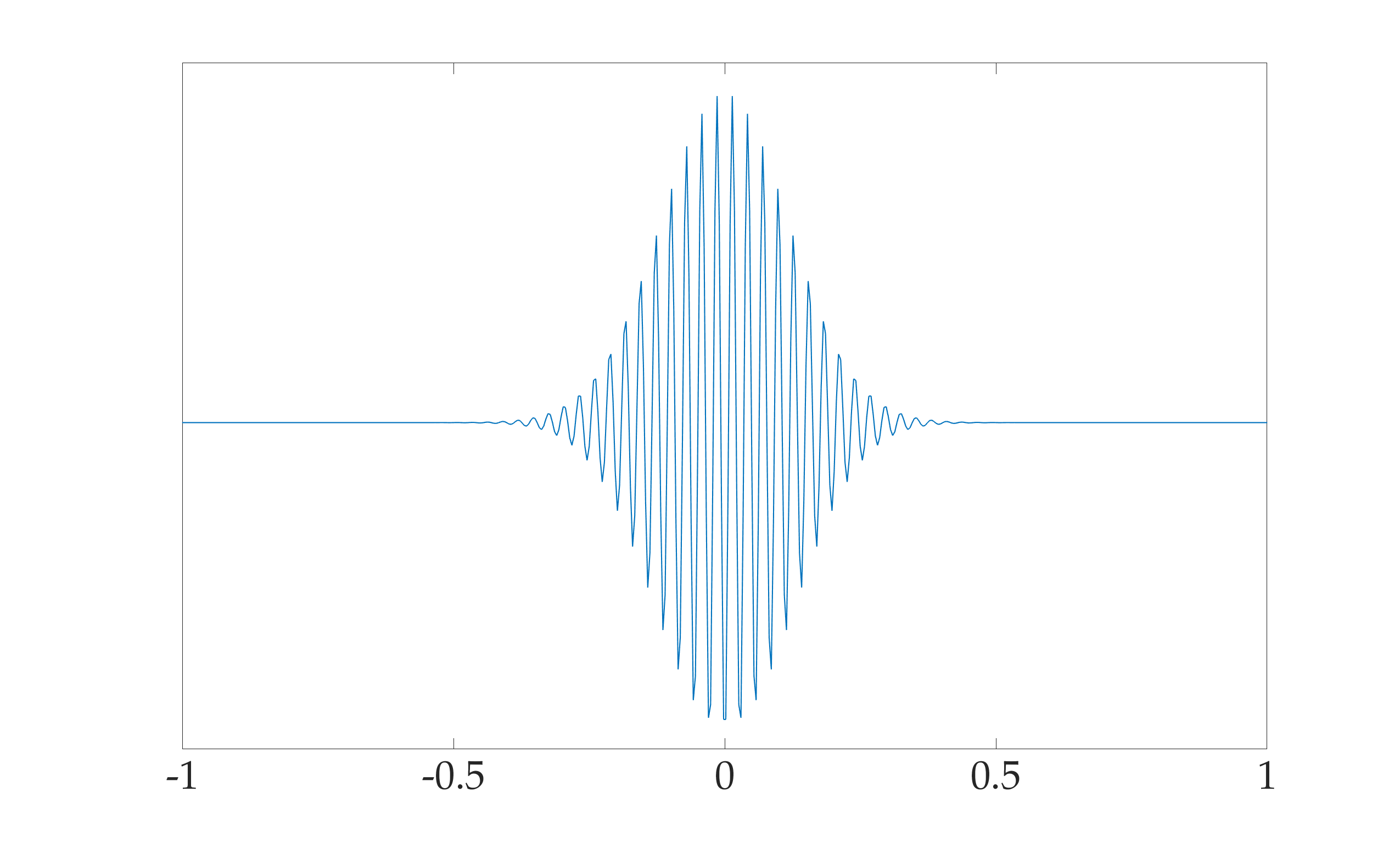}
	}\hspace{0.2cm}
	\subfloat[$s=0.1$]{
		\includegraphics[scale=0.15]{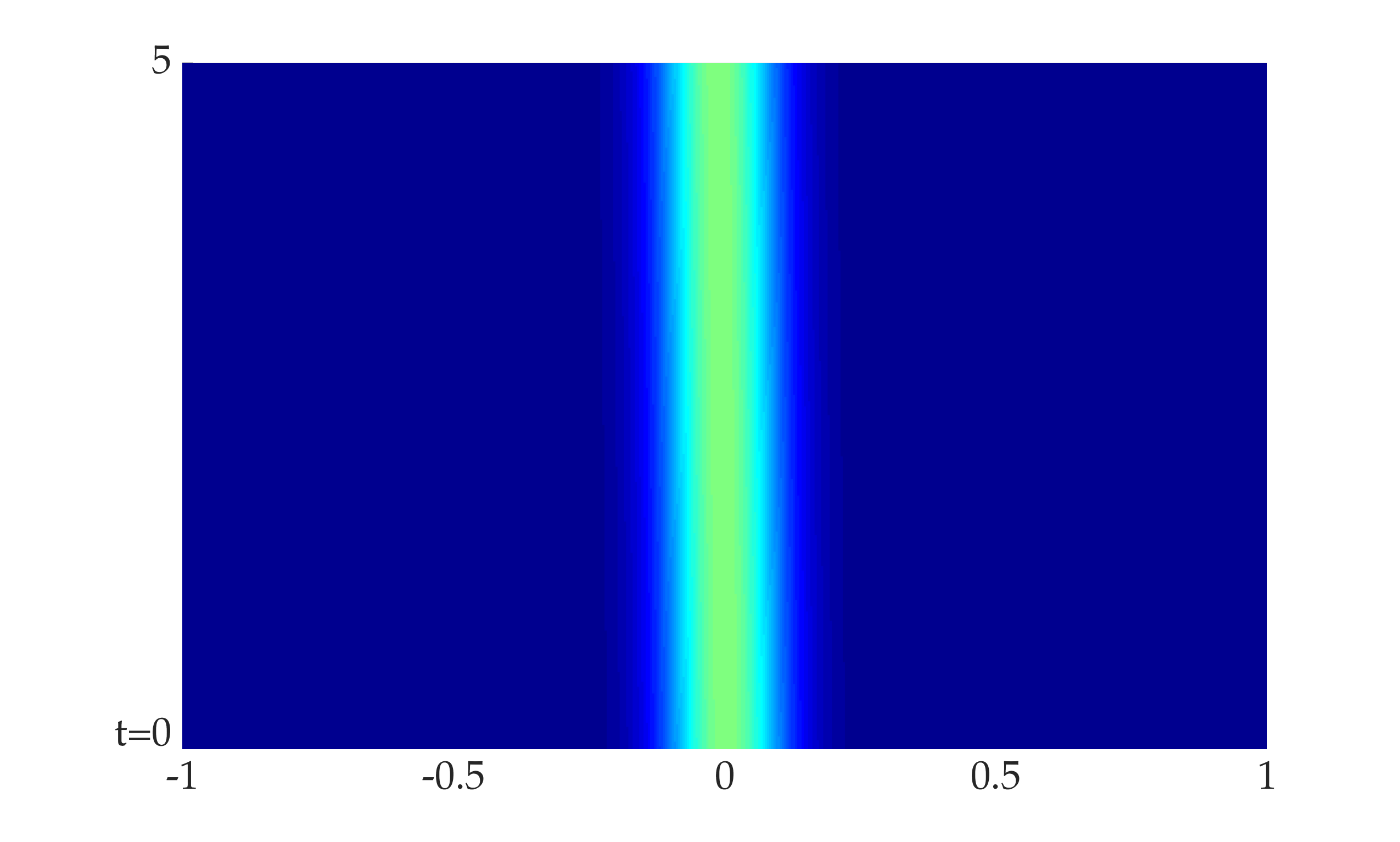}
	}\\
	\subfloat[$s=0.5$]{
		\includegraphics[scale=0.15]{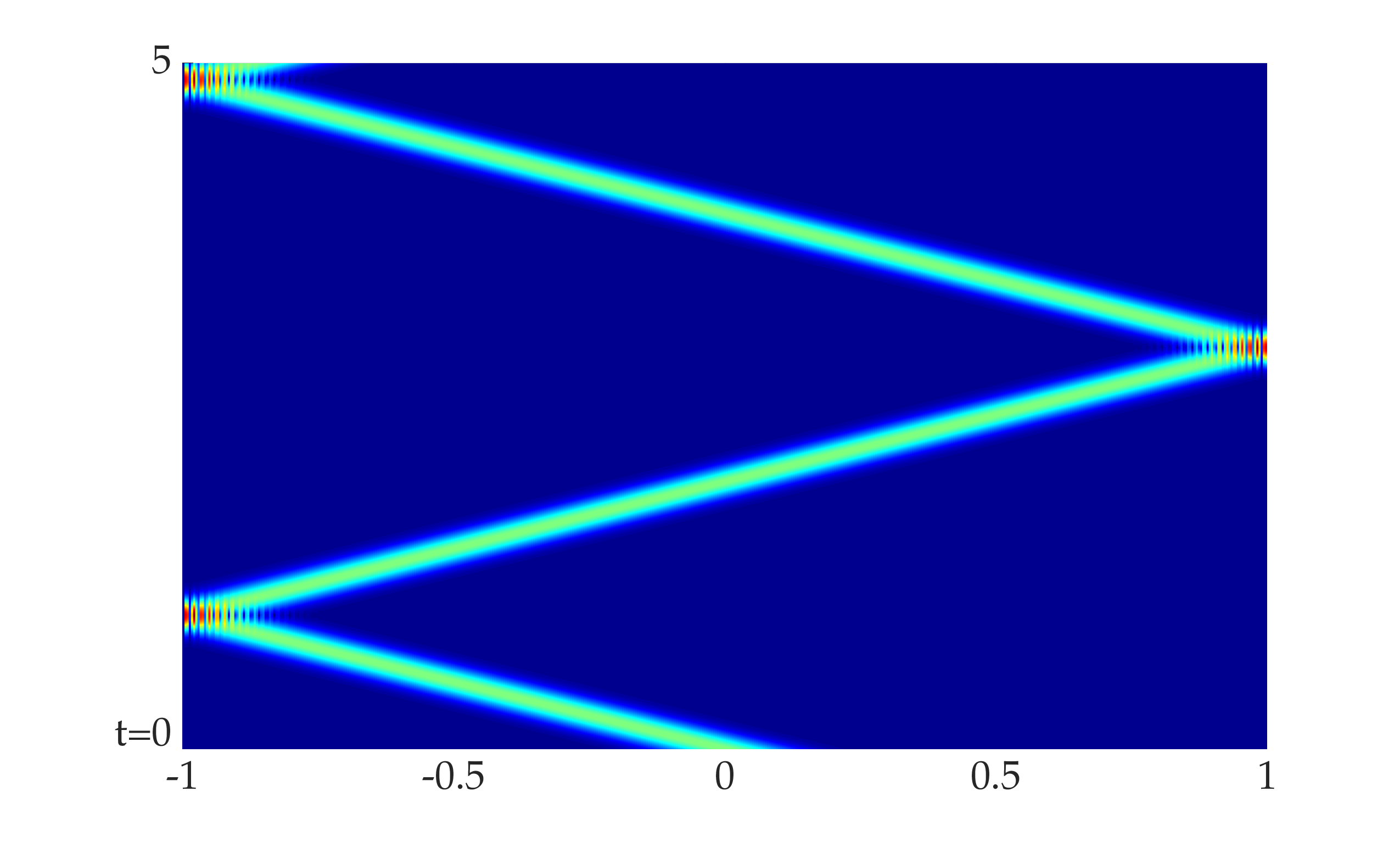}
	}\hspace{0.2cm}	
	\subfloat[$s=0.9$]{
		\includegraphics[width=6.5cm,height=3.9cm]{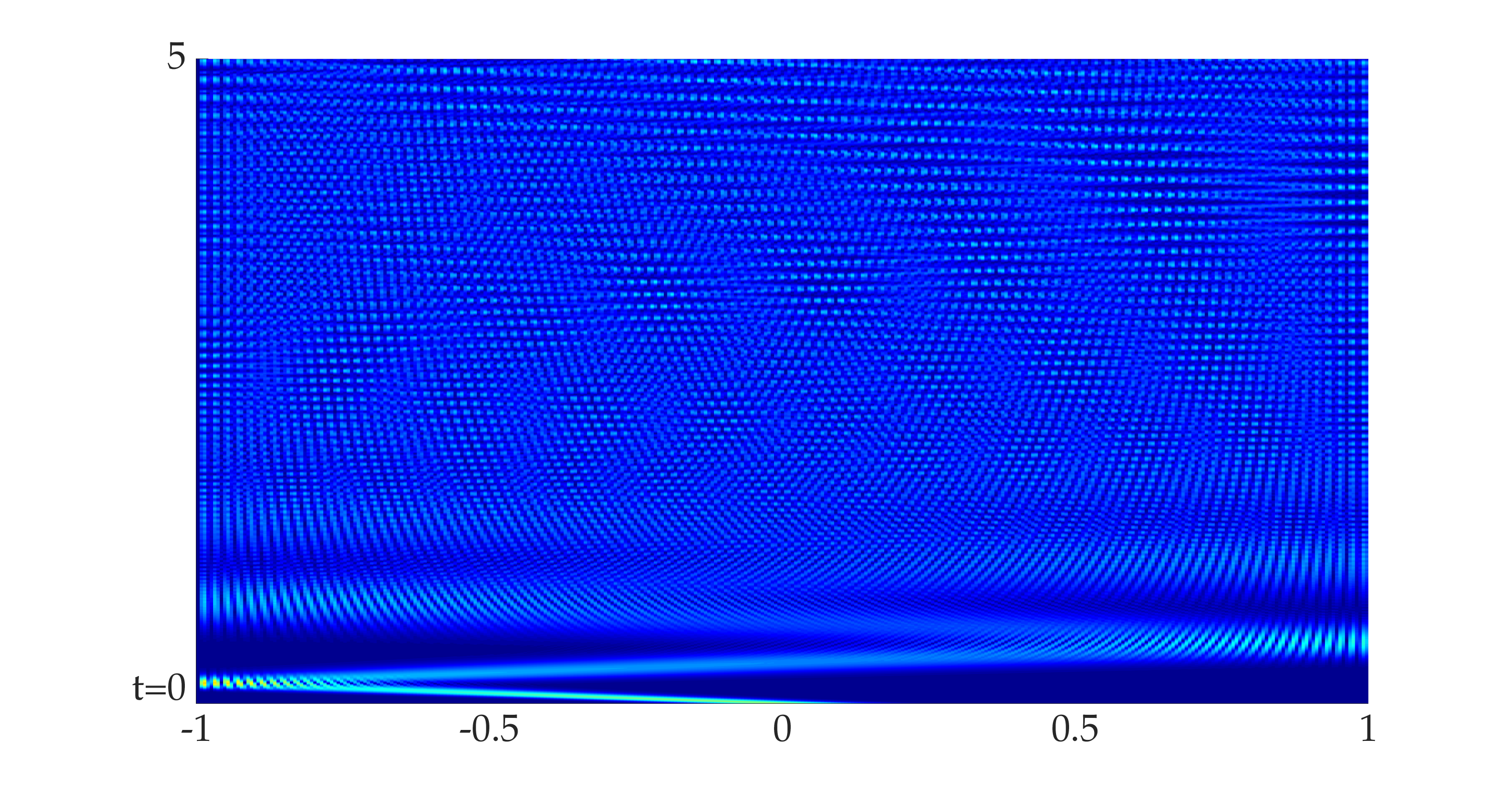}
	}\caption{Propagation of the solution for $\xi_0 = 2\pi^2$ and different values of $s$.}\label{plot_01}
\end{figure} 
It is seen there that, for small values of $s$, say $s=0.1$, the solution remains concentrated along rays which propagate only in the vertical direction. In other words, there is no propagation in space and, as we mentioned before, this implies that it will not be possible to control these solutions, no matter how one places the controls. For $s=0.5$, instead, the plots show that the solutions propagate along rays which reach the boundary of the space domain in finite time, and are reflected according to the laws of optics. This translates in the fact that, provided that the time is large enough, it will be possible to control these solutions, acting  with a control distributed in a neighborhood $\omega$ of the boundary. The case of high values of the power $s$ of the fractional Laplacian is the most puzzling one. For instance, for $s=0.9$ our simulations seem to show a lost of concentration of the solution along the ray, while our theoretical results would suggest that this concentration is preserved. On the other hand, we believe that what the simulations are showing is not necessarily in contradiction with the theory. We are going to address this issue in more details a later time. 

\begin{figure}[h]
	\centering 
	\subfloat[Initial data]{
		\includegraphics[scale=0.15]{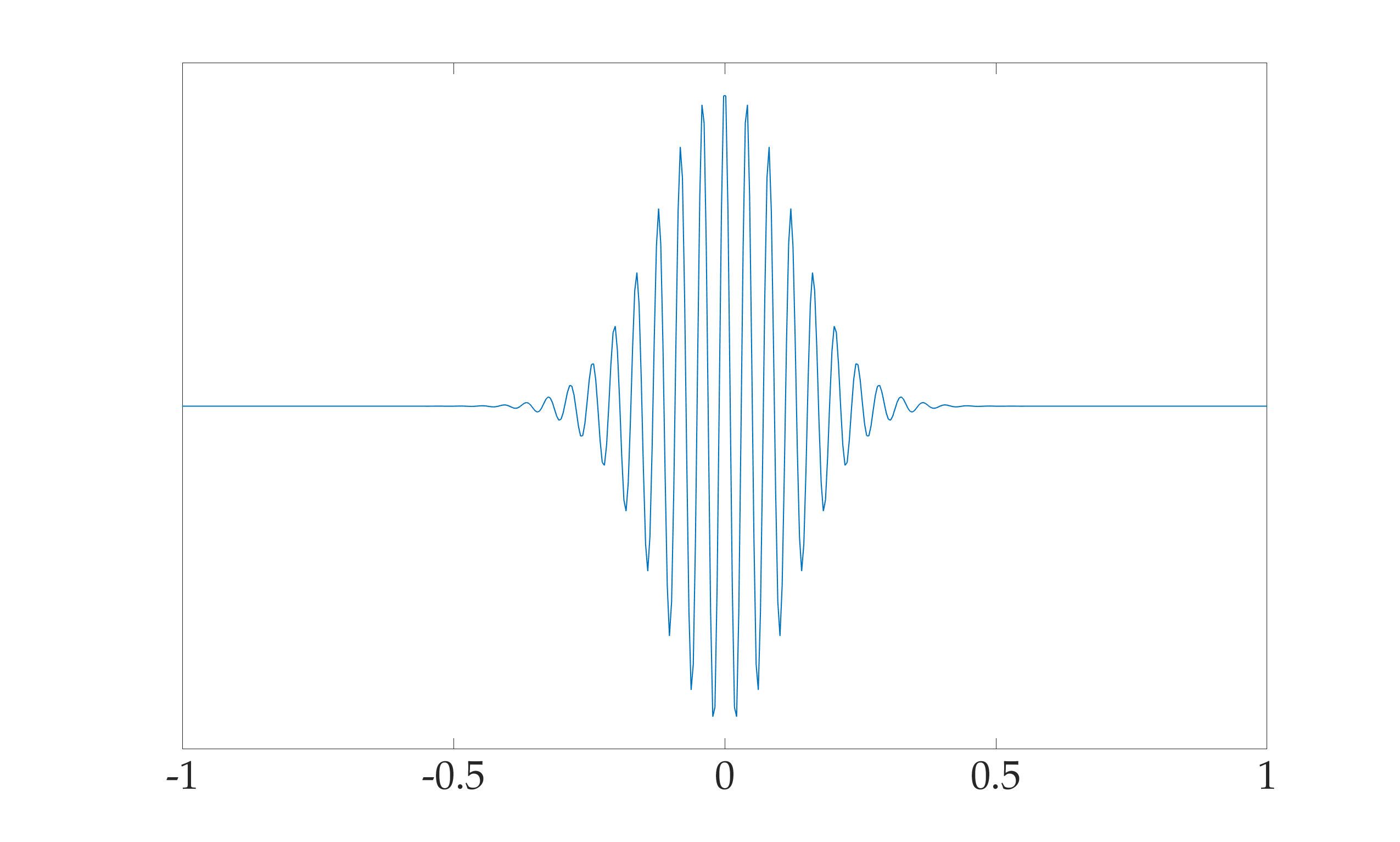}
	}\hspace{0.2cm}
	\subfloat[$s=0.1$]{
		\includegraphics[scale=0.15]{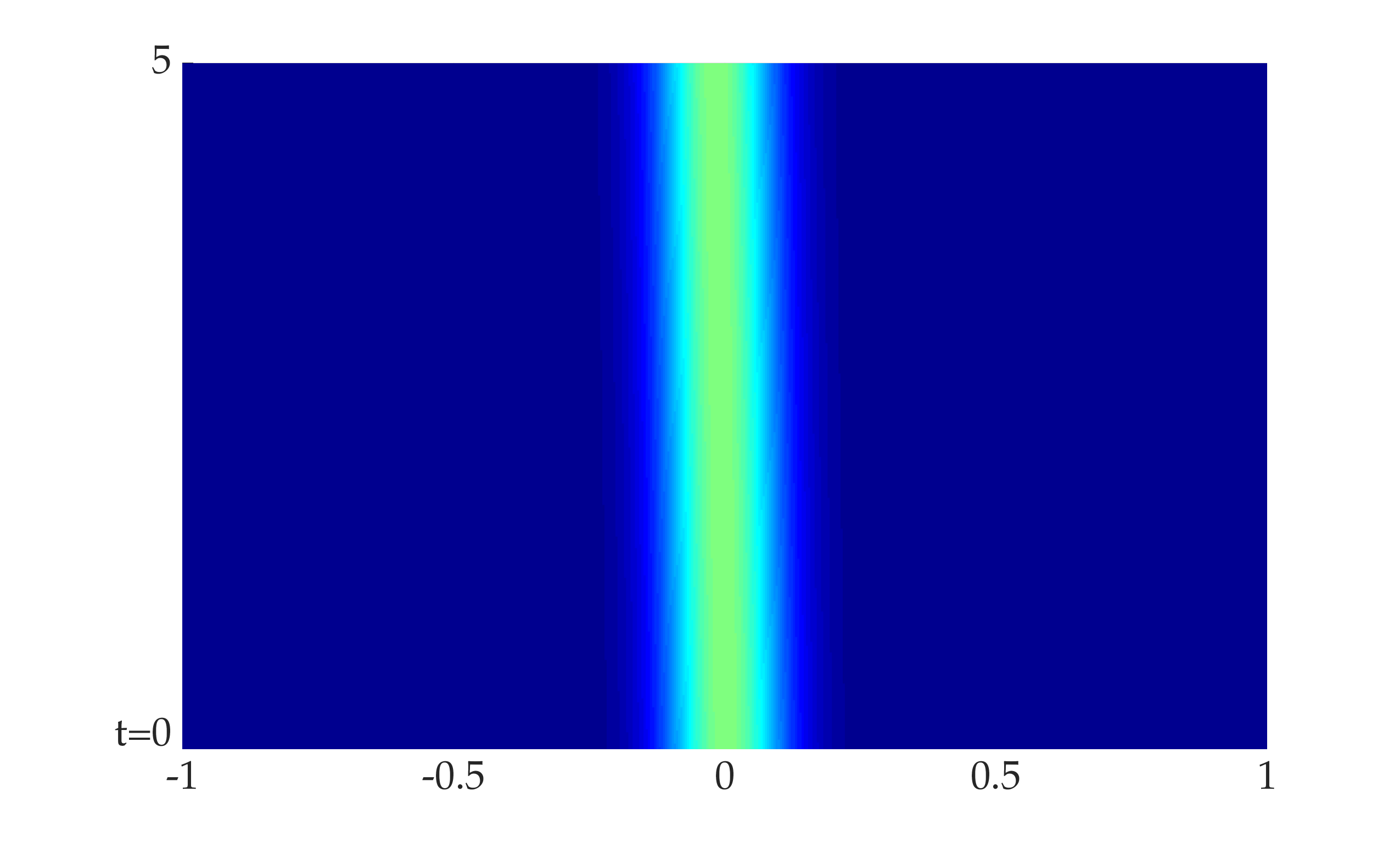}
	}\\
	\subfloat[$s=0.5$]{
		\includegraphics[scale=0.15]{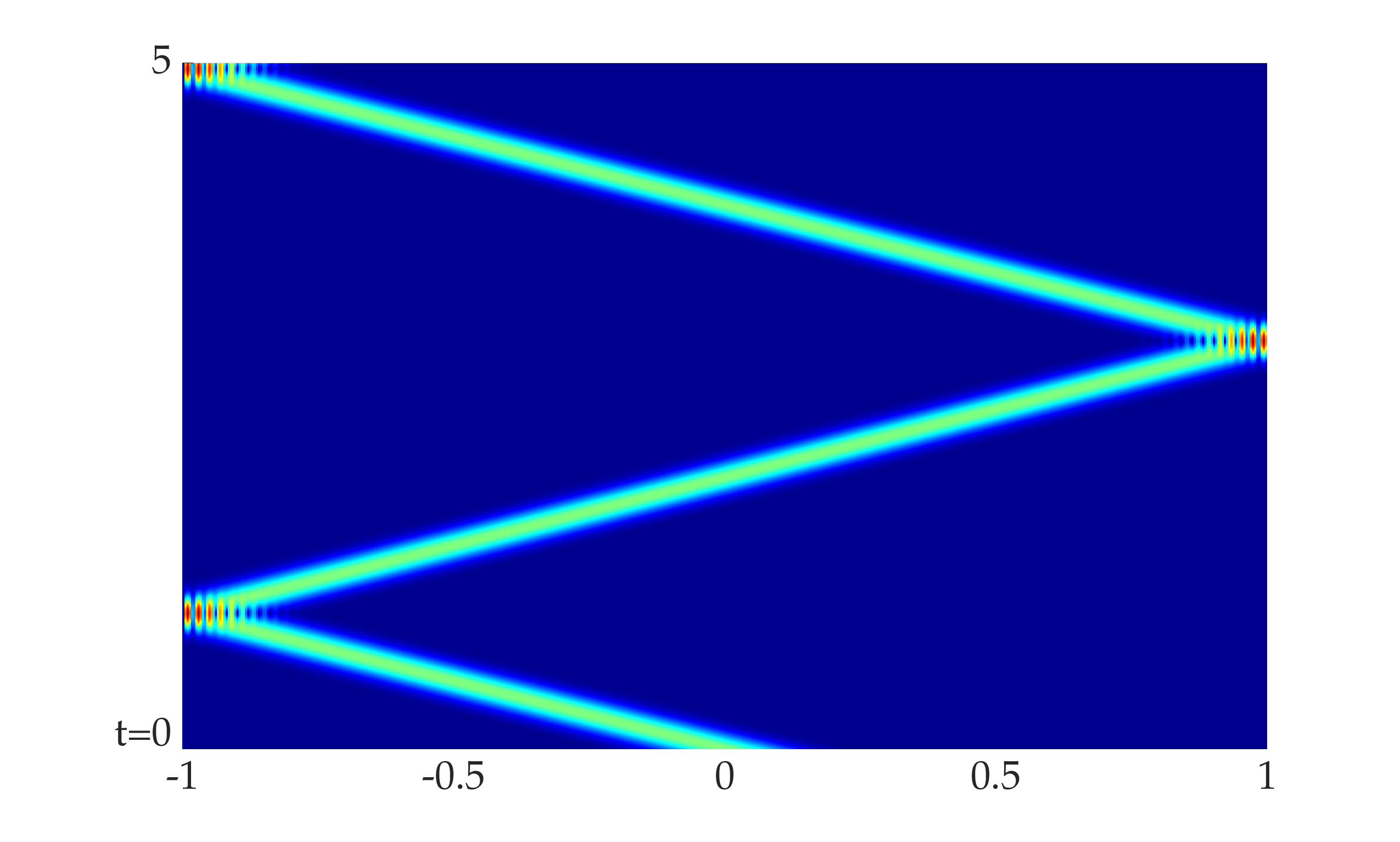}
	}\hspace{0.2cm}	
	\subfloat[$s=0.9$]{
		\includegraphics[width=6.5cm,height=3.9cm]{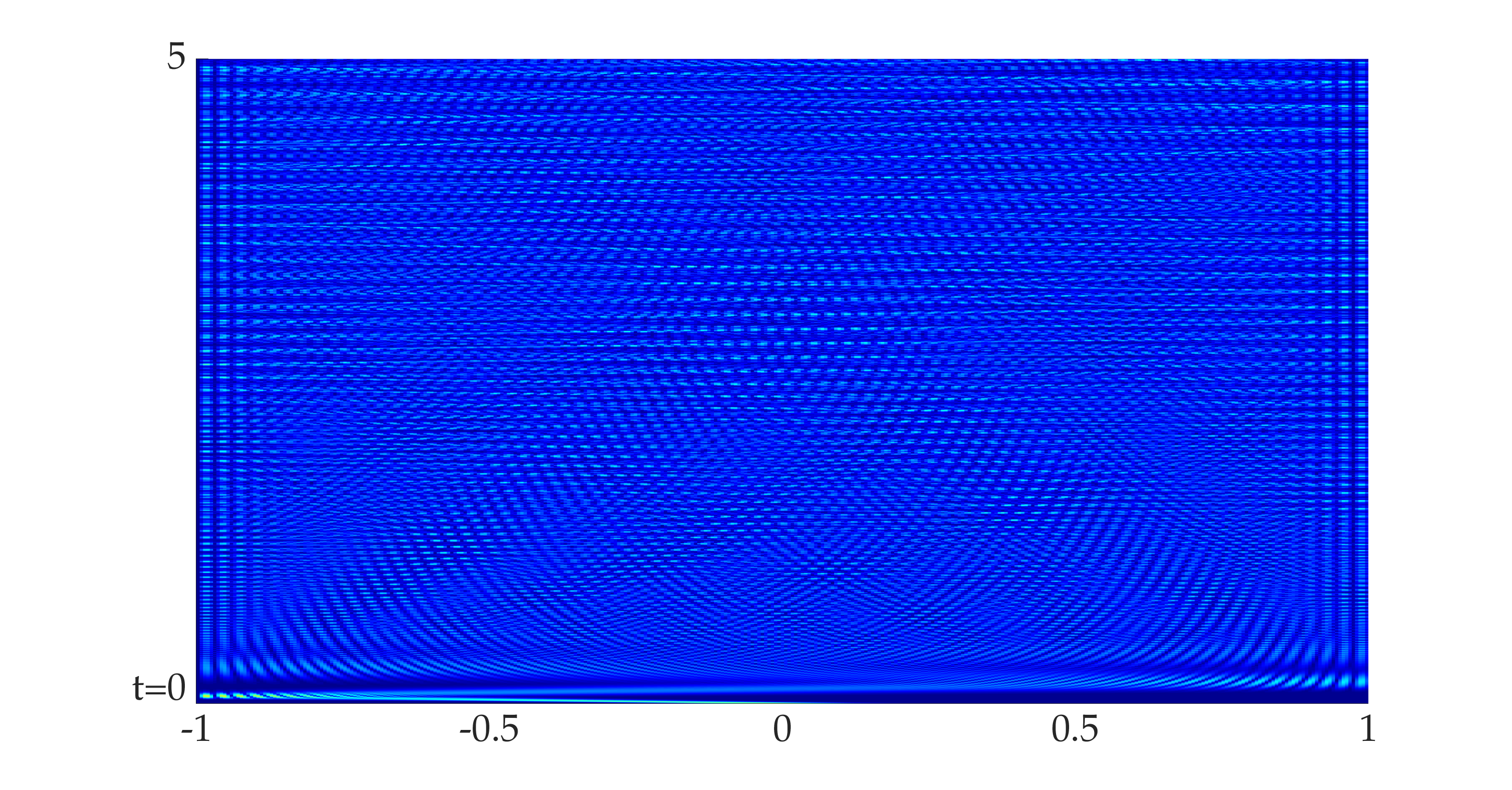}
	}\caption{Propagation of the solution for $\xi_0 = \pi^2/16$ and different values of $s$.}\label{plot_02}
\end{figure} 

In Figure \ref{plot_02}, the simulations have been run with an initial datum with frequency $\xi_0=\pi^2/16$. The plots obtained show a behavior which is totally analogous with what observed in Figure \ref{plot_01}:
\begin{itemize}
	\item[$\bullet$] For $s=0.1$, the solutions are once again concentrated along vertical rays, without propagation in time and, therefore, without possibility of being controlled.
	
	\item[$\bullet$] For $s=0.5$, we have propagation with constant velocity, and the ray reaches the boundary in finite time.
	
	\item[$\bullet$] For $s=0.9$ the chaotic comportment is still present.
\end{itemize}

Once again, the most surprising case is the last one, for $s>0.5$, in which the simulations seem to display dispersive features. Nevertheless, as we mentioned before, we retain that this does not contradict the results of Section \ref{loc_sec}. In our opinion, this strange phenomenon appearing in the plot can be explained with the accumulation of higher order terms in the asymptotic expansion of $\ue{z}$ which, combined with the small size of the space interval considered, enhance a chaotic behavior. This interpretation is supported by the fact that, as it is shown in Figure \ref{plot_03}, enlarging the space domain up to $(-6,6)$ seems to fix the problem and the localization of the solution along the rays appears once again. 

\begin{figure}[h]
	\centering 
	\subfloat[$\xi_0 = 2\pi^2$]{
		\includegraphics[scale=0.15]{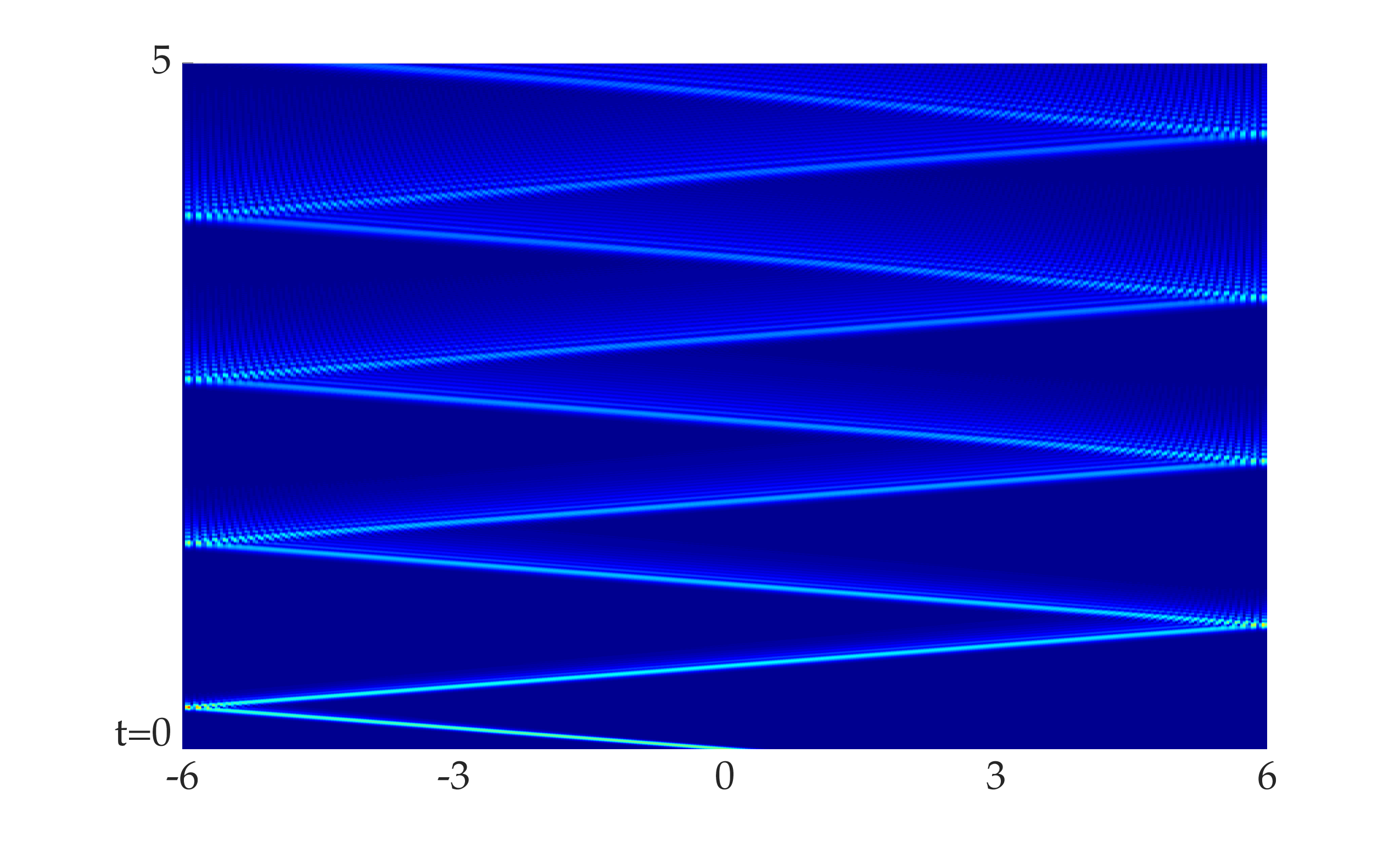}
	}\hspace{0.2cm}	
	\subfloat[$\xi_0 = \pi^2/16$]{
		\includegraphics[scale=0.15]{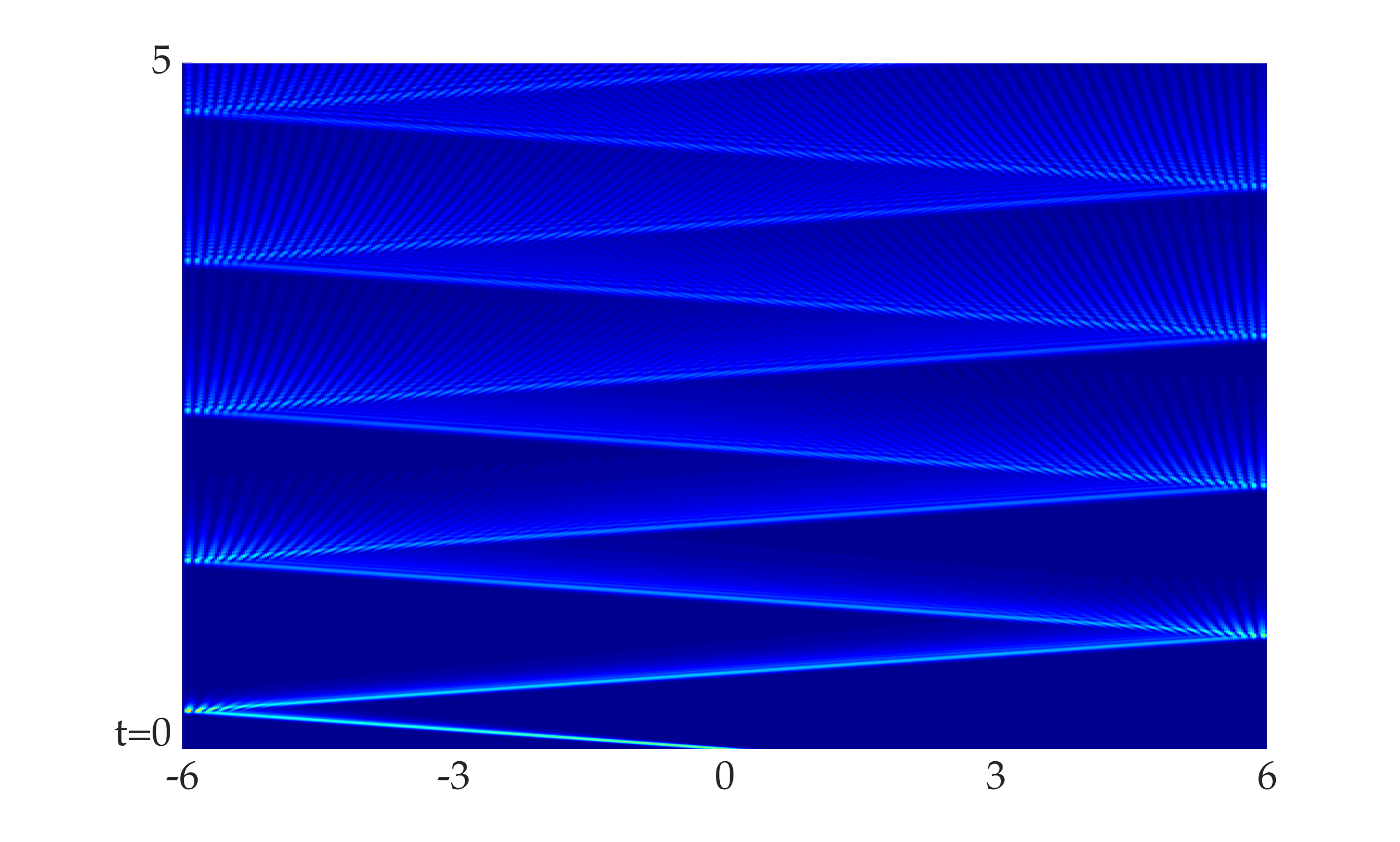}
	}\caption{Propagation of the solution for $s=0.9$ on the space interval $(-6,6)$.}\label{plot_03}
\end{figure} 

\section{Conclusions and final remarks}

In this article, we provided an explicit construction of a WKB ansatz for analyzing the propagation properties for a one-dimensional non-local Schr\"odigner equation involving the fractional Laplacian. In this way, we have been able to show that the solutions to the model that we considered are localized along the rays of geometric optics. In particular we proved that, the same as in the local case, the energy outside of a neighborhood of a ray is arbitrarily small. This fact can then be exploited for the study of controllability properties, in terms of the classical Geometric Control Condition. 

To the best of our knowledge, this kind of analysis had not been developed yet in a non-local setting and this constitutes the main contribution of our work. Nevertheless, we must mention that our approach is asymptotically correct though far from being optimal. 

Actually, it is commonly known that WKB is not the best technique for treating asymptotic analysis for wave-like PDEs. This because WKB solutions are usually subjected to \textit{caustic} formation, which occurs at those points where the characteristic flow $\Phi_t$ associated to the equation ceases to be a diffeomorphism. Depending on the model analyzed, this may happen in a finite time $T$, which might even be very small.

In general, this breakdown occurs when the density of rays becomes infinite. In that case, geometric optics incorrectly predicts that the amplitude of the solution is infinite. These problems are clearly not present in the exact solution of the model, but are
merely an artifact of the WKB ansatz. Caustics, therefore, indicate the appearance of new $\varepsilon$-oscillatory scales, which are not captured by the simple oscillatory ansatz.

The consideration of these difficulties led to the development of the theory of other techniques related to micro-local analysis and propagation of singularities, such as Fourier integral operators, Gaussian beams or a Wigner transformation approach. 

In particular, Gaussian beams is a high frequency asymptotic model which is closely related to geometrical optics. The quasi-solution are still assumed to be in the form
\begin{align*}
	\ue{z} = ae^{\frac i\varepsilon \phi},
\end{align*}  
but unlike the WKB approach, in this case the phase is complex-valued. The main advantage of this is that there is no breakdown at caustics, thus providing an approximation which remains valid globally in time.

On the other hand, in the case of equation \eqref{main_eq} considered in this paper, the choice of a complex-valued phase yields to technical difficulties at the moment of identifying the ansatz, which a the present stage we are not able to overcome. For this reason, we do not yet know how to develop a Guassian beam analysis for the solutions of our model.  

We would like to stress that, for the specific case of \eqref{main_eq}, we do not face the problem of caustics formation, since the Hamiltonian system \eqref{char_syst} admits a solution which is global in time. Nevertheless, this may not remain true when considering more general problems, for instance multi-dimensional or one with variable coefficients. For this reason, we believe that a different approach based on the aforementioned techniques should be developed in order to have a complete exhaustive understanding of the phenomena that we addressed in this paper.

\section{Acknowledgements}
This project has received funding from the European Research Council (ERC) under the European Union's Horizon 2020 research and innovation programme (grant agreement No. 694126-DyCon). The work of the first was partially supported by the Grants MTM2014-52347 and MTM2017-92996 of MINECO (Spain) and by the Grant FA9550-18-1-0242 of AFOSR.
	
The authors wish to acknowledge Enrique Zuazua (Autonomous University of Madrid, DeustoTech and University of Deusto - Bilbao, and University Pierre and Marie Curie - Paris) for having suggested the topic of this research and for interesting discussions. A special thank goes to Aurora Marica (Polytechnic University of Bucharest) for her help for obtaining the simulations presented in this work.
The second author (AA) thanks DeustoTech and the University of Deusto for hosting him in during summer of 2017, where part of this work was performed.

\bibliography{biblio}   

\begin{thebibliography}{10}

\bibitem{askar1978explicit}
{\sc Askar, A., and Cakmak, A.~S.}
\newblock Explicit integration method for the time-dependent {S}chrodinger
  equation for collision problems.
\newblock {\em J. Chem. Phys. 68}, 6 (1978), 2794--2798.

\bibitem{bardos1992sharp}
{\sc Bardos, C., Lebeau, G., and Rauch, J.}
\newblock Sharp sufficient conditions for the observation, control, and
  stabilization of waves from the boundary.
\newblock {\em SIAM J. Control Optim. 30}, 5 (1992), 1024--1065.

\bibitem{biccari2014internal}
{\sc Biccari, U.}
\newblock Internal control for non-local {S}chr{\"o}dinger and wave equations
  involving the fractional {L}aplace operator.
\newblock {\em Preprint - arXiv:1411.7800\/} (2018).

\bibitem{biccari2017controllability}
{\sc Biccari, U., and Hern{\'a}ndez-Santamar{\'\i}a, V.}
\newblock Controllability of a one-dimensional fractional heat equation:
  theoretical and numerical aspects.
\newblock {\em IMA. J. Math. Control. I.}, doi:10.1093/imamci/dny025 (2018).

\bibitem{biccari2018propagation}
{\sc Biccari, U., Marica, A., and Zuazua, E.}
\newblock Propagation of one and two-dimensional discrete waves under finite
  difference approximation.
\newblock {\em arXiv preprint arXiv:1806.09313\/} (2018).

\bibitem{biccari2017local}
{\sc Biccari, U., Warma, M., and Zuazua, E.}
\newblock Local elliptic regularity for the {D}irichlet fractional {L}aplacian.
\newblock {\em Adv. Nonlin. Stud. 17}, 2 (2017), 387--409.

\bibitem{brillouin1926mecanique}
{\sc Brillouin, L.}
\newblock La m{\'e}canique ondulatoire de {S}chr{\"o}dinger; une m{\'e}thode
  g{\'e}n{\'e}rale de r{\'e}solution par approximations successives.
\newblock {\em Compt. Rend. Hebd. Seances Acad. Sci. 183\/} (1926), 24--26.

\bibitem{burq1997condition}
{\sc Burq, N., and G{\'e}rard, P.}
\newblock Condition n{\'e}cessaire et suffisante pour la contr{\^o}labilit{\'e}
  exacte des ondes.
\newblock {\em C.R. Acad. Sci. S{\'e}r. I Math. 325}, 7 (1997), 749--752.

\bibitem{cazenave2003semilinear}
{\sc Cazenave, T.}
\newblock {\em Semilinear schr{\"o}dinger equations}, vol.~10.
\newblock American Mathematical Soc., 2003.

\bibitem{vcerveny1982computation}
{\sc {\v{C}}erven{\`y}, V., Popov, M.~M., and P{\v{s}}en{\v{c}}{\'\i}k, I.}
\newblock Computation of wave fields in inhomogeneous media - {G}aussian beam
  approach.
\newblock {\em Geophysics 70}, 1 (1982), 109--128.

\bibitem{dihitchhiker}
{\sc Di~Nezza, E., Palatucci, G., and Valdinoci, E.}
\newblock Hitchhikerʼs guide to the fractional {S}obolev spaces.
\newblock {\em Bull. Sci. Math. 136}, 5 (2012), 521--573.

\bibitem{hill2001prestack}
{\sc Hill, N.~R.}
\newblock Prestack {G}aussian-beam depth migration.
\newblock {\em Geophysics 66}, 4 (2001), 1240--1250.

\bibitem{hormander1990analysis}
{\sc H{\"o}rmander, L.}
\newblock {\em The analysis of linear partial differential operators. I.
  {D}istribution theory and {F}ourier analysis.}
\newblock Springer Verlag, Berlin, 1990.

\bibitem{jumarie2006modified}
{\sc Jumarie, G.}
\newblock Modified {R}iemann-{L}iouville derivative and fractional {T}aylor
  series of nondifferentiable functions further results.
\newblock {\em Comput. Math. Appl. 51}, 9-10 (2006), 1367--1376.

\bibitem{kramers1926wellenmechanik}
{\sc Kramers, H.~A.}
\newblock Wellenmechanik und halbzahlige {Q}uantisierung.
\newblock {\em Zeitschrift f{\"u}r Physik A Hadrons and Nuclei 39}, 10 (1926),
  828--840.

\bibitem{laskin2000quantum}
{\sc Laskin, N.}
\newblock Fractional quantum mechanics.
\newblock {\em Phys. Rev. E 62}, 3 (2000), 3135.

\bibitem{laskin2000fractional}
{\sc Laskin, N.}
\newblock Fractional quantum mechanics and {L}{\'e}vy path integrals.
\newblock {\em Phys. Letters A 268}, 4 (2000), 298--305.

\bibitem{laskin2002fractional}
{\sc Laskin, N.}
\newblock Fractional {S}chr{\"o}dinger equation.
\newblock {\em Phys. Rev. E 66}, 5 (2002), 056108.

\bibitem{liu2009recovery}
{\sc Liu, H., and Ralston, J.}
\newblock Recovery of high frequency wave fields for the acoustic wave
  equation.
\newblock {\em Multis. Model. Simul. 8}, 2 (2009), 428--444.

\bibitem{liu2010recovery}
{\sc Liu, H., and Ralston, J.}
\newblock Recovery of high frequency wave fields from phase space-based
  measurements.
\newblock {\em Multis. Model. Simul. 8}, 2 (2010), 622--644.

\bibitem{liu2013error}
{\sc Liu, H., Runborg, O., and Tanushev, N.}
\newblock Error estimates for {G}aussian beam superpositions.
\newblock {\em Math. Comput. 82}, 282 (2013), 919--952.

\bibitem{liu2015sobolev}
{\sc Liu, H., Runborg, O., and Tanushev, N.~M.}
\newblock Sobolev and max norm error estimates for {G}aussian beam
  superpositions.
\newblock {\em arXiv preprint arXiv:1510.09120\/} (2015).

\bibitem{longhi2015fractional}
{\sc Longhi, S.}
\newblock Fractional {S}chr{\"o}dinger equation in optics.
\newblock {\em Optics letters 40}, 6 (2015), 1117--1120.

\bibitem{marica2015propagation}
{\sc Marica, A., and Zuazua, E.}
\newblock Propagation of 1{D} waves in regular discrete heterogeneous media: a
  {W}igner measure approach.
\newblock {\em Found. Comput. Math. 15}, 6 (2015), 1571--1636.

\bibitem{metzler2000random}
{\sc Metzler, R., and Klafter, J.}
\newblock The random walk's guide to anomalous diffusion: a fractional dynamics
  approach.
\newblock {\em Physics reports 339}, 1 (2000), 1--77.

\bibitem{ralston1982gaussian}
{\sc Ralston, J.}
\newblock Gaussian beams and the propagation of singularities.
\newblock {\em Studies in partial differential equations 23\/} (1982),
  206--248.

\bibitem{ros2014pohozaev}
{\sc Ros-Oton, X., and Serra, J.}
\newblock The {P}ohozaev identity for the fractional {L}aplacian.
\newblock {\em Arch. Ration. Mech. Anal. 213}, 2 (2014), 587--628.

\bibitem{spigler1997survey}
{\sc Spigler, R., and Vianello, M.}
\newblock A survey on the {L}iouville-{G}reen ({WKB}) approximation for linear
  difference equations of the second order.
\newblock In {\em Second International Conference on Difference Equations\/}
  (1997), Gordon and Breach, pp.~567--577.

\bibitem{stickler2013potential}
{\sc Stickler, B.}
\newblock Potential condensed-matter realization of space-fractional quantum
  mechanics: {T}he one-dimensional {L}{\'e}vy crystal.
\newblock {\em Phys. Rev. E 88}, 1 (2013), 012120.

\bibitem{tanushev2008superpositions}
{\sc Tanushev, N.~M.}
\newblock Superpositions and higher order {G}aussian beams.
\newblock {\em Comm. Math. Sc. 6}, 2 (2008), 449--475.

\bibitem{tanushev2007mountain}
{\sc Tanushev, N.~M., Qian, J., and Ralston, J.~V.}
\newblock Mountain waves and {G}aussian beams.
\newblock {\em Multiscale Modeling \& Simulation 6}, 2 (2007), 688--709.

\bibitem{trujillo1999riemann}
{\sc Trujillo, J., Rivero, M., and Bonilla, B.}
\newblock On a {R}iemann-{L}iouville generalized {T}aylor's formula.
\newblock {\em J. Math. Anal. Appl. 231}, 1 (1999), 255--265.

\bibitem{wentzel1926verallgemeinerung}
{\sc Wentzel, G.}
\newblock Eine verallgemeinerung der quantenbedingungen f{\"u}r die zwecke der
  wellenmechanik.
\newblock {\em Zeitschrift f{\"u}r Physik A Hadrons and Nuclei 38}, 6 (1926),
  518--529.

\end{thebibliography}
\end{document}